\newcommand{\hit}{\textit{higher-index terms}}
\newcommand{\lit}{\textit{lower-index terms}}
\def\<{\langle}
\def\>{\rangle}
\newtheorem{theorem}{Theorem}[section]
\newtheorem{proposition}[theorem]{Proposition}
\newtheorem{corollary}[theorem]{Corollary}
\newtheorem{definition}[theorem]{Definition}
\newtheorem{remark}[theorem]{Remark}
\newtheorem{conjecture}[theorem]{Conjecture}
\newtheorem{lemma}[theorem]{Lemma}
\newtheorem{observation}[theorem]{Observation}
\begin{document}

\pagestyle{fancy}
\fancyhf{} % Clear all headers and footers
\lhead{} % Left header - empty
\rhead{} % Right header - empty
\cfoot{} % Clear the footer (leave empty if you don't want page numbers)
% Remove header and footer lines
\renewcommand{\headrulewidth}{0pt} % No line in header
\renewcommand{\footrulewidth}{0pt} % No line in footer

%%%%%%%%%%%%%%%%%%%%%%%%%%%%%%%%%%%%%%%%%

\title{The Ordered Zeckendorf Game}

\author{Ivan Bortnovskyi}\email{ib538@cam.ac.uk}
\address{Department of Pure Mathematics and Mathematical Statistics, Univeristy of Cambridge, Cambridge, United Kingdom, CB3 0WA}

\author{Michael Lucas}\email{ml2130@cam.ac.uk}
\address{Department of Pure Mathematics and Mathematical Statistics, Univeristy of Cambridge, Cambridge, United Kingdom, CB3 0WA}

\author{Steven J. Miller}\email{Steven.J.Miller@williams.edu}
\address{Department of Mathematics and Statistics, Williams College,
Williamstown, MA 01267}

\author{Iana Vranesko}\email{yv2@williams.edu}
\address{Department of Mathematics, Williams College, Williamstown, MA, 01267}

\author{Ren Watson}\email{renwatson@utexas.edu}
\address{Department of Mathematics, University of Texas at Austin, Austin, TX 78703}

\author{Cameron White}\email{cjw3@williams.edu}
\address{Department of Mathematics, Williams College, Williamstown, MA, 01267}

\date{\today}

\keywords{Fibonacci numbers, Zeckendorf's Theorem, Zeckendorf Games}

\thanks{The first listed author was supported by The Winston Churchill Foundation of the United States. The NSF grant DMS2242623 supported the fifth listed author, and the fourth and sixth authors were supported by Williams College and the Finnerty fund. Finally, the second listed author was supported by the Dr. Herchel Smith Fellowship Fund.}

\maketitle

\begin{abstract}
We introduce and analyze the \emph{ordered Zeckendorf game}, a novel combinatorial two-player game inspired by Zeckendorf’s Theorem, which guarantees a unique decomposition of every positive integer as a sum of non-consecutive Fibonacci numbers. Building on the original Zeckendorf game—previously studied in the context of unordered multisets—we impose a new constraint: all moves must respect the order of summands. The result is a richer and more nuanced strategic landscape that significantly alters game dynamics.

Unlike the classical version, where Player 2 has a dominant strategy for all $n > 2$, our ordered variant reveals a more balanced and unpredictable structure. In particular, we find that Player 1 wins for nearly all values $n \leq 25$, with a single exception at $n = 18$. This shift in strategic outcomes is driven by our game's key features: adjacency constraints that limit allowable merges and splits to neighboring terms, and the introduction of a switching move that reorders pairs.

We prove that the game always terminates in the Zeckendorf decomposition—now in ascending order—by constructing a strictly decreasing monovariant. We further establish bounds on game complexity: the shortest possible game has length exactly $n - Z(n)$, where $Z(n)$ is the number of summands in the Zeckendorf decomposition of $n$, while the longest game exhibits quadratic growth, with $M(n) \sim \frac{n^2}{2}$ as $n \to \infty$.

Empirical simulations suggest that random game trajectories exhibit log-normal convergence in their move distributions. Overall, the ordered Zeckendorf game enriches the landscape of number-theoretic games, posing new algorithmic challenges and offering fertile ground for future exploration into strategic complexity, probabilistic behavior, and generalizations to other recurrence relations.
\end{abstract}

\tableofcontents

%%%%%%%%%%%%%%%%%%%%%%%%%%%%%%%%%%%%%%%%%%%%%%%%%%%%%%%%%%
%%%%%%%%%%%%%%%%%%%%%%%%%%%%%%%%%%%%%%%%%%%%%%%%%%%%%%%%%%

\section{Introduction}

\subsection{History and Motivation}

The Fibonacci sequence is given by $F_1=1,F_2=2$, and $F_{i+1}=F_{i}+F_{i-1}$ for $i \geq 2$. Zeckendorf’s Theorem \cite{Zeckendorf1972} states that every positive integer can be uniquely represented as a sum of non-consecutive Fibonacci numbers, noting that in order to guarantee uniqueness it is necessary that we have only one term equal to $1$. This decomposition can be efficiently obtained via a greedy algorithm, and its structural and probabilistic properties have been extensively investigated across combinatorics, number theory, and discrete mathematics. A game-theoretic interpretation of the theorem was first proposed in~\cite{ZeckGameOriginal}, where two players alternately perform legal replacements on a multiset of Fibonacci numbers until reaching the unique Zeckendorf decomposition. A formal definition of the classical Zeckendorf Game is given below.

\begin{definition}[The Two Player Zeckendorf Game]
    At the beginning of the game, there is an unordered list of $n$ 1's: $F_1 \land F_1 \land \cdots \land F_1$. Players alternate turns, and on each turn, a player may make one of the following moves.
    \begin{enumerate}
        \item \textbf{Merging:} Replace $F_i \land F_{i+1}$ with $F_{i+2}$.
        \item \textbf{Merging Ones:} Replace $F_1\land F_1$ with $F_2$.
        \item \textbf{Splitting:} Replace $F_i \land F_i$ with $F_{i+1} \land F_{i-2}$ for $i >2$.
        \item \textbf{Splitting Twos:} Replace $F_2 \land F_2$ with $F_3 \land F_1$.
    \end{enumerate} The game ends when no further moves can be made.
\end{definition}

In the classical Zeckendorf game, the order of summands is irrelevant—players operate on unordered collections of Fibonacci numbers, applying merge or split operations regardless of position. It is known that in this formulation, Player 2 has a winning strategy for all $n > 2$, which significantly limits the strategic complexity of the game~\cite{BairdSmith2019}.

To address these limitations and explore deeper combinatorial structures, we introduce a new variant: the \emph{ordered Zeckendorf game}. In our version, the game begins with an ordered $n$-tuple of $F_1 = 1$, and all allowed moves are restricted to adjacent elements. The resulting dynamics differ significantly from those in the unordered setting, as adjacency constraints restrict player choices and emphasize positional strategy. Remarkably, we find that for all $n \leq 25$—with the sole exception of $n = 18$, where Player 2 has a winning strategy—Player 1 can force a win. This is in sharp contrast to the classical game and suggests that the introduction of order restores balance and increases strategic depth.

Our variant introduces a novel move: the \emph{switch}, which allows players to swap adjacent elements out of order. This rule further enhances the game's complexity, producing larger game trees and requiring careful local planning. The interaction between merging, splitting, and switching operations creates a rich environment for strategic exploration, raising new algorithmic and theoretical questions.

This paper presents a comprehensive analysis of the ordered Zeckendorf game, combining theoretical results with computational observations. We prove that every game terminates in the ascending Zeckendorf decomposition of $n$, using a carefully constructed monovariant. We also establish lower and upper bounds on game length and propose a conjectural strategy for maximizing the number of moves. Finally, we investigate the distribution of random play lengths and observe evidence suggesting convergence to a log-normal distribution.

Our work contributes to an expanding literature on Zeckendorf-type games and generalized decompositions. For readers seeking additional context or further directions, we recommend the following references:
\begin{itemize}
    \item Beckwith et al.~\cite{Beckwith2013}, on average gap distributions in generalized Zeckendorf decompositions,
    \item Kopp et al.~\cite{Kopp2011}, on the number of summands in Zeckendorf decompositions,
    \item Gueganic et al.~\cite{Gueganic2019}, on limiting distributions in generalized settings,
    \item Baird-Smith et al.~\cite{BairdSmith2019}, on the generalized Zeckendorf game,
    \item Boldyriew et al.~\cite{Boldyriew2020}, on extensions to non-constant recurrence relations,
    \item Cusenza et al.~\cite{Cusenza2021}, on multiplayer and alliance-based Zeckendorf games.
\end{itemize}

\subsection{Main Results}

We use \(F_i\) to denote the \(i^{\text{th}}\) Fibonacci number, where \(F_1 = 1\), \(F_2 = 2\), and the recurrence relation \(F_{i+1} = F_i + F_{i-1}\) holds for all \(i \geq 2\).

\begin{definition}[The Two-Player Ordered Zeckendorf Game]
At the start of the game, the state is an ordered list of \(n\) copies of \(F_1\): \((F_1, F_1, \dots, F_1)\). Players alternate turns, and on each turn, a player may perform one of the following five legal moves.
\begin{enumerate}
    \item \textbf{Merging:} Replace a pair of adjacent terms \((F_i, F_{i+1})\) with \(F_{i+2}\).
    \item \textbf{Merging Ones:} Replace \((F_1, F_1)\) with \(F_2\).
    \item \textbf{Splitting:} Replace a pair \((F_i, F_i)\) with \((F_{i-2}, F_{i+1})\), for \(i > 2\).
    \item \textbf{Splitting Twos:} Replace \((F_2, F_2)\) with \((F_1, F_3)\).
    \item \textbf{Switching:} Swap adjacent elements \((F_i, F_j) \rightarrow (F_j, F_i)\) whenever \(i > j\).
\end{enumerate}
The game ends when no further moves can be applied; that is, when the list of Fibonacci numbers is strictly increasing and no merge or split move is possible.
\end{definition}

These sum-preserving operations derive naturally from the Fibonacci recurrence and the constraints imposed by ordering. Merges and splits allow construction and deconstruction of Fibonacci terms, while switches restore ascending order to enable further operations. Together, they encode a localized, combinatorial decision-making framework that is both computationally rich and strategically deep.

Our main theoretical results fall into three broad categories: termination and correctness of the game, bounds on its length, and structural conjectures about optimal play. 

\begin{theorem}[Termination and Final State]
\label{thm:termination}
The ordered Zeckendorf game always terminates in a finite number of moves. Moreover, the final state is the Zeckendorf decomposition of \(n\), expressed as a strictly increasing sequence of Fibonacci numbers.
\end{theorem}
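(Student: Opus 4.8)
The plan is to prove termination by exhibiting a monovariant — a function of the game state that strictly decreases with every legal move and takes values in a well-ordered set — and then to identify the unique terminal state by a direct structural analysis. Since every reachable state is an ordered list of Fibonacci numbers whose values sum to the fixed integer $n$, there are only finitely many states, so a strictly decreasing monovariant immediately rules out cycles and forces the game to halt after finitely many moves at a position where no move is available.

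For the monovariant I would track four non-negative integer quantities of a state $(F_{a_1}, \dots, F_{a_k})$ and compare states lexicographically: the number of terms $k$; the index sum $\Sigma = \sum_{j} a_j$; the number of terms equal to $F_2$, call it $n_2$; and the number of inversions $I = \#\{(p,q): p<q,\ a_p > a_q\}$. The idea is that each of the five moves strictly decreases the first of these four coordinates that it affects. A merge of either kind deletes one term, so $k$ drops. A regular split $(F_i,F_i)\to(F_{i-2},F_{i+1})$ with $i>2$ leaves $k$ fixed but lowers $\Sigma$ by one. The splitting-twos move $(F_2,F_2)\to(F_1,F_3)$ leaves both $k$ and $\Sigma$ unchanged but removes two copies of $F_2$, so $n_2$ drops. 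Finally a switch fixes the multiset of indices — hence $k$, $\Sigma$, and $n_2$ — while removing exactly one inversion, so $I$ drops. Because $\N^4$ under the lexicographic order is well-founded, no infinite play is possible.

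To identify the final state I would characterize the positions from which no move is available. No switch is possible exactly when the indices are non-decreasing; no merge or split of an equal adjacent pair is possible exactly when no two adjacent indices are equal; and no merge of a consecutive pair is possible exactly when no two adjacent indices differ by one. Together these force $a_1 < a_2 < \cdots < a_k$ with $a_{j+1} \geq a_j + 2$ for all $j$ — that is, a strictly increasing sequence of non-consecutive Fibonacci numbers. Since the value sum is the invariant $n$, Zeckendorf's Theorem guarantees that exactly one such sequence exists, namely the Zeckendorf decomposition of $n$; hence every terminal state equals this decomposition written in ascending order, completing the proof.

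The main obstacle is designing the monovariant so that all five moves are simultaneously accounted for. The naive candidates — the term count $k$ and the index sum $\Sigma$ — each fail to detect a move: $k$ is blind to splits and switches, while $\Sigma$ is preserved by both the splitting-twos move and switches. The splitting-twos move is the genuinely delicate case, since it preserves the two most natural monovariants at once, which is what forces the introduction of the auxiliary count $n_2$; the switch then requires the inversion count $I$ as a final tie-breaker. Verifying that these refinements never interfere — for instance, that a split's effect on $\Sigma$ dominates whatever it does to $n_2$ and $I$ — is exactly what the lexicographic ordering makes automatic, so the remaining work is the routine per-move bookkeeping sketched above.
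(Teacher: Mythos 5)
Your proof is correct, but it takes a genuinely different route from the paper's. The paper proves termination with a single integer-valued monovariant, the position-weighted sum $f(S)=\sum_{j=1}^{k}(k+1-j)F_{i_j}$, and verifies by direct Fibonacci arithmetic that each of the five moves decreases $f$ by at least $1$; since $f$ starts at $n(n+1)/2$ and can never drop below $n$, this gives termination and the explicit length bound $M(n)\le\binom{n}{2}$ simultaneously --- indeed the paper proves Theorem \ref{thm:upper-bound-termination} first and derives Theorem \ref{thm:termination} as a corollary of that proof. Your lexicographic monovariant $(k,\Sigma,n_2,I)\in\mathbb{N}^4$ trades that quantitative content for lighter bookkeeping: each move visibly decreases the first coordinate it touches, no arithmetic identities are needed, and the one delicate move --- splitting twos, which preserves both the term count and the index sum --- is isolated in the auxiliary counter $n_2$ rather than absorbed into a weighted sum. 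The price is that well-foundedness of the lexicographic order yields finiteness of play but no usable estimate of its length, so the paper's argument buys Theorem \ref{thm:upper-bound-termination} for free while yours proves only the statement at hand. Your identification of the terminal state (non-decreasing because no switch applies, strictly increasing because no equal adjacent pair remains, index gaps at least $2$ because no merge applies, then uniqueness via Zeckendorf's Theorem) is the same argument the paper uses, just spelled out in more detail than its one-sentence justification.
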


The proof of this result proceeds by defining a suitable monovariant that decreases strictly with each move. The monovariant simultaneously tracks disorder (via inversions) and total weight (via the sum of Fibonacci indices), ensuring that the game cannot cycle and must converge. Importantly, the final configuration must be the unique Zeckendorf decomposition, as all alternative representations are eliminated by the move set.
Since the game always terminates in a unique configuration, we can define natural extremal questions: What is the minimal number of moves required? What is the maximum? We begin with the lower bound.

\begin{proposition}[Shortest Game]
\label{Shortest Game}
The shortest possible game has length \(n - Z(n)\), where \(Z(n)\) is the number of terms in the Zeckendorf decomposition of \(n\).
\end{proposition}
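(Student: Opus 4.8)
The plan is to establish the lower bound $n - Z(n)$ by exhibiting a quantity that changes in a controlled way with each move, and then showing that this quantity must change by a fixed total amount over the course of any game. Since the game starts with $n$ terms (the list of $n$ copies of $F_1$) and ends with exactly $Z(n)$ terms (the Zeckendorf decomposition), the natural candidate to track is the number of terms in the current list. So first I would classify each of the five legal moves by its effect on the length of the list: merging and merging ones each reduce the count by exactly one, splitting and splitting twos each leave the count unchanged, and switching also leaves the count unchanged.

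Let me think about what this gives me. Let me reason through the bookkeeping...
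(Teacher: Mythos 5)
Your counting argument is exactly the paper's lower-bound argument: the number of summands starts at $n$, ends at $Z(n)$, drops by exactly one under each merge (both kinds), and is preserved by splits and switches, so every game uses at least $n - Z(n)$ merge moves and hence at least $n - Z(n)$ moves in total. That part is sound and matches the paper.

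However, the proposal stops there, and that leaves a genuine gap: the proposition asserts the shortest game \emph{has} length $n - Z(n)$, so you must also exhibit a game achieving this bound, i.e., a game consisting of merges only, with no splits or switches ever needed. This is not automatic in the ordered game, because a merge requires the pair $(F_i, F_{i+1})$ to be adjacent \emph{and in ascending order}, and the final state must be the Zeckendorf decomposition listed in increasing order. The paper closes this by a greedy construction: working at the right end of the tuple, repeatedly merge to build the largest Fibonacci term $F_\ell \le n$ (e.g., from $(\dots, F_1, F_1, F_1)$ one merges ones to get $(\dots, F_1, F_2)$, then $(\dots, F_3)$, then merges more ones next to it, and so on), leave it in place at the right, and recurse on the remaining copies of $F_1$ to its left. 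One must check this process never creates a descent that would force a switch and never requires a split; since each merge reduces the summand count by one, the total is exactly $n - Z(n)$. Without this constructive half, your argument only proves the inequality ``shortest game $\ge n - Z(n)$,'' not the stated equality.
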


This minimal length corresponds to a greedy strategy in which players always combine adjacent pairs as quickly and effectivelly as possible to reach the final state with minimal intermediate steps.

At the other extreme, we are interested in how long the game can be prolonged under optimal play designed to delay termination.

\begin{theorem}[Upper Bound on the Maximal Game Length]\label{thm:upper-bound-termination}
Let \( M(n) \) denote the maximal length of the Ordered Zeckendorf game on $n$. Then
\[
M(n)~\leq~\frac{n(n-1)}{2}.
\]
\end{theorem}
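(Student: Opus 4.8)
The plan is to exhibit a nonnegative, integer-valued monovariant $\Phi$ on game states that starts at $\binom{n}{2}$ and strictly decreases with every legal move. Since each move then lowers $\Phi$ by at least one and $\Phi \geq 0$ throughout, the number of moves is at most $\Phi_{\mathrm{start}} = \binom{n}{2} = \tfrac{n(n-1)}{2}$. The first step is to record the exact invariants any such $\Phi$ must respect. Writing a state as $(F_{i_1},\dots,F_{i_m})$, every one of the five moves preserves the total value $\sum_j F_{i_j} = n$, and only the two merge moves change the number of terms $m$, each lowering it by exactly one. Consequently every game performs exactly $n - Z(n)$ merges, and the whole difficulty is concentrated in bounding the splits and switches, which leave $m$ fixed.

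The natural measure of disorder to feed into $\Phi$ is the inversion count $I$, the number of pairs $p<q$ with $i_p > i_q$. Here $I = 0$ both at the start (all terms equal) and at termination (strictly increasing), and a switch lowers $I$ by exactly one. I would combine $I$ with the index-sum $S = \sum_j i_j$ and a term quadratic in $m$ so that the initial value is forced to be $\binom{n}{2}$; note that $S - m = \sum_j (i_j-1) \geq 0$ vanishes at the start, so a candidate of the shape $\Phi = \binom{m}{2} + \beta(S-m) + \gamma I$ begins at $\binom{n}{2}$. I would then compute $\Delta\Phi$ for each move type: a switch lowers $I$ and hence $\Phi$; a merge lowers $\binom{m}{2}$ by $m-1$, which must dominate whatever inversions it creates; and a split lowers $S$, which must dominate the inversions it creates.

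The hard part, and the crux of the argument, is the split move $(F_i,F_i)\to(F_{i-2},F_{i+1})$. Because it manufactures a strictly smaller term $F_{i-2}$, it can create one new inversion for every neighbouring term of index $i-1$ or $i$, and a long run of equal terms makes this creation as large as $\Theta(m) = \Theta(n)$, while the move lowers $S$ by only one. This shows that no fixed linear combination of $m$, $S$, and $I$ can decrease on every move, so the sharp constant $\tfrac12 n(n-1)$ cannot simply be read off a naive potential; this is exactly the obstacle I expect to dominate the work.

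To get around it, I would pass to the global accounting identity $\sum_{\mathrm{moves}} \Delta I = 0$, which holds because $I$ returns to its starting value $0$. Since each switch contributes $-1$, this identity says that the number of switches equals the total number of inversions created (net) by merges and splits. I would then bound that total by a charging argument: track the $n$ original unit tokens as they are redistributed among the Fibonacci ``bags'' by merges and splits, and assign every created inversion to a pair of these tokens in such a way that no pair is charged more than once. Combining the injectivity of this charge with the exact merge count $n - Z(n)$ would give $(\text{merges}) + (\text{splits}) + (\text{switches}) \leq \binom{n}{2}$, yielding $M(n) \leq \tfrac{n(n-1)}{2}$. Verifying that the charge is genuinely injective, and confirming that the factor $\tfrac12$ emerges precisely rather than as an $O(n^2)$ bound with a worse constant, is the most delicate bookkeeping step and the part I would expect to require the greatest care.
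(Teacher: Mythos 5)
Your proposal stops short of a proof at exactly the point you yourself flag as the crux, and what is missing is the entire quantitative content of the theorem. First, the charging argument that is supposed to control the switches is never constructed: you do not specify which pair of tokens a given switch (or merge, or split) gets charged to, and the injectivity you need --- that no pair of the $n$ unit tokens is ever charged twice over the whole game, across all three move types --- is precisely the hard combinatorial step. It is not obviously true, either: after a large bag carrying token $x$ switches rightward past a small bag carrying token $y$, later merges can grow $y$'s bag and later splits can shrink $x$'s bag (a token of $F_i$ may land in the $F_{i-2}$ piece), so re-crossings and double charges must genuinely be ruled out. Second, even granting that the number of switches equals the net inversions created by merges and splits, your target inequality $(\text{merges})+(\text{splits})+(\text{switches}) \le \binom{n}{2}$ requires a bound on the number of splits, and no such bound appears anywhere in the proposal: splits preserve the number of terms $m$, and splitting twos even preserves the index sum ($2+2 \mapsto 1+3$), so none of the quantities you track ($m$, $S$, $I$) controls them. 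Your own analysis correctly shows that the naive potential $\binom{m}{2}+\beta(S-m)+\gamma I$ fails on splits inside long runs of equal terms; but what you put in its place is a plan for a proof, not a proof.

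For contrast, the paper's argument needs neither inversions nor token pairs: it uses the single monovariant $f(S)=\sum_{j=1}^{k}(k+1-j)F_{i_j}$, which weights the Fibonacci \emph{values} (not the indices) by position from the right. A short case check shows every one of the five moves decreases $f$ by at least $1$; in particular a split of $(F_i,F_i)$ decreases $f$ by exactly $F_{i-1}$ regardless of how many equal terms sit nearby, so the obstacle that defeated your potential never arises. Since $f$ starts at $n(n+1)/2$ and always remains at least $n$, the game lasts at most $\binom{n}{2}$ moves. If you wish to salvage your route, you would need to prove two lemmas that are currently absent --- the injectivity of the charging scheme, and a linear bound on the number of merge/split moves (the paper invokes such a bound, of the form $3n+1$, from the literature on the unordered game in its refined estimate) --- and that is substantially more work than exhibiting the weighted-sum monovariant.
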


To complement this upper bound, we provide a lower bound for $M(n)$ by proposing a specific strategy that aims to maximize the game length.

\begin{definition}[Long Game Strategy]
\label{def:long-strat}
    The Long Game Strategy (LGS) is defined as the following priority-based strategy.
    \begin{enumerate}
        \item Perform \textbf{switch} moves (in any order).
        \item Combine adjacent \textbf{ones}, starting from the left.
        \item Perform \textbf{split} moves, starting from the right.
        \item Perform \textbf{merge} moves, starting from the left.
\end{enumerate}
\end{definition}

\begin{conjecture}[Strategy for Maximum Length]
\label{Longest strategy}
The LGS has the longest game length.
\end{conjecture}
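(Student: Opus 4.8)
The plan is to first reduce the conjecture to a pure counting statement about splits and switches, and then to attack that statement by an exchange argument on play sequences. The key observation is a conservation law for the number of terms in the list: a merge or a merge-of-ones (moves 1 and 2) each decreases the number of terms by exactly one, while a split, a split-of-twos, or a switch (moves 3, 4, 5) preserves the number of terms. Since every game starts with $n$ terms and ends with $Z(n)$ terms, every complete play must use exactly $n - Z(n)$ merging moves, which is precisely the mechanism behind Proposition~\ref{Shortest Game}. Writing $T$ for the total number of moves in a play, we therefore obtain the identity
\[
T \;=\; (n - Z(n)) \;+\; \#\{\text{splits}\} \;+\; \#\{\text{switches}\},
\]
so that maximizing game length is \emph{equivalent} to maximizing the combined number of split and switch moves. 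This recasts Conjecture~\ref{Longest strategy} as the claim that the LGS realizes the maximum possible value of $\#\{\text{splits}\} + \#\{\text{switches}\}$; the priority structure of the LGS---deferring general merges to last while eagerly performing switches and manufacturing splittable equal pairs by combining ones---is precisely tuned to this objective.

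For the main argument I would use an exchange (or ``local rewriting'') argument. Starting from an arbitrary maximal play $P$, I would show that whenever two consecutive moves of $P$ violate the LGS priority order (switch $>$ combine-ones $>$ split $>$ merge), they can be reordered or rewritten to produce a legal play $P'$ with $T(P') \ge T(P)$ that agrees with the LGS on a strictly longer prefix. The engine of this step is a family of commutation lemmas: two moves acting on disjoint (non-adjacent) windows commute freely, so the interesting cases are local collisions, and the crucial monotonicity claim is that performing an available split \emph{before} an avoidable merge never decreases the eventual split count, because a merge $(F_i,F_{i+1})\to F_{i+2}$ can destroy an adjacent equal pair that would otherwise have been splittable. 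Iterating the exchange would drive $P$ to the LGS play while never shortening it. A complementary route is to find an exact potential $\Phi$ that counts ``future realizable splits,'' show $\Phi$ bounds $\#\{\text{splits}\}$ under \emph{every} play order, and verify that the LGS saturates it; one could buttress either approach by an induction on $n$ (or on the largest index present), relating the game on $n$ to games on the sublists produced after the first forced merge.

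The hard part, and presumably the reason the statement remains conjectural, is that the move set is \emph{not confluent with respect to length}: local rewrites can silently enable or foreclose long-range cascades several moves later, so the naive exchange argument does not obviously terminate with a length guarantee. This non-locality is already visible in the index-sum bookkeeping: tracking $S=\sum_k i_k$, the sum of Fibonacci indices, each split of type 3 lowers $S$ by exactly one while a merge $(F_i,F_{i+1})\to F_{i+2}$ lowers $S$ by $i-1$, so the total number of splits is coupled to the \emph{indices at which merges happen}, not merely to their count. Consequently a merge performed at a small index is ``cheaper'' in lost split potential than one performed at a large index, and an optimal play must orchestrate splits and merges so that unavoidable merges occur as low as possible---exactly the behavior the LGS enforces by splitting from the right and merging from the left. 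Making this trade-off quantitative, that is, proving no deviation from the LGS can reroute a cascade to gain even a single extra split or switch, is the central obstacle; I would expect to need a carefully weighted monovariant combining inversions with an index-sensitive weight on splittable pairs, together with a verification of the exact closed form for the LGS length, whose asymptotic agreement $M(n)\sim n^2/2$ with the bound of Theorem~\ref{thm:upper-bound-termination} furnishes a useful consistency check.
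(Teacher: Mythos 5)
The statement you are addressing is, in the paper itself, only a conjecture: the authors state explicitly that ``a rigorous proof establishing its optimality remains an open problem,'' and they support it solely by exhaustive simulation together with bounds (Theorems \ref{thm:lower-bound-termination} and \ref{thm:refined-upper-bound-termination}) that sandwich $M(n)$ asymptotically but do not identify the LGS as optimal. So there is no proof in the paper to compare against, and the question is whether your proposal closes the gap. It does not, and you essentially say so yourself.

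Your preliminary reductions are correct and genuinely useful. Since merges (moves 1 and 2) each decrease the number of terms by one while splits and switches preserve it, every complete play uses exactly $n - Z(n)$ merging moves, hence $T = (n - Z(n)) + \#\{\text{splits}\} + \#\{\text{switches}\}$, so maximizing length is equivalent to maximizing splits plus switches; this goes a step beyond what the paper records. Your index-sum bookkeeping is also right: a type-3 split lowers $\sum_k i_k$ by exactly one, while a merge $(F_i,F_{i+1})\to F_{i+2}$ lowers it by $i-1$, so the attainable split count is coupled to the indices at which merges occur. But the core of the argument --- the exchange step --- is only stated, never proven. The claim that any pair of consecutive moves violating the LGS priority can be rewritten without decreasing total length \emph{is} the hard content of the conjecture, and your final paragraph concedes the obstruction: the move set is not confluent with respect to length, so the naive exchange argument does not terminate with a length guarantee. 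The key monotonicity claim (performing an available split before an avoidable merge never decreases the eventual split count) is asserted without proof and is not obviously true, since deferring a merge alters adjacency structure and can foreclose switches or cascades far from the rewrite site --- exactly the non-locality you identify. In short, this is a sensible research plan with correct conservation-law reductions, but it leaves the central claim --- that no deviation from the LGS gains even a single extra split or switch --- entirely open, which is precisely where the paper also stops.
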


This conjecture is supported by exhaustive simulations, providing strong empirical evidence for its effectiveness. Building on this, we establish a lower bound on the maximal game length by following the LGS. This bound requires an auxiliary lemma about the number of repetitions of $F_i$ for $i \geq 3$ under the LGS. As we establish our lower bound for maximal game length by lower bounding the number of moves necessary to satisfy certain criteria on the number of copies of $F_1$ and $F_2$ present in the current decomposition, it is helpful to make the following distinction.

\begin{definition}[Higher-Index Terms and Lower-Index Terms]
    We call a Fibonacci number $F_i$ a \emph{higher-index term} if $i \geq 3$, and a \emph{lower-index term} if $i \leq 2$.
\end{definition}

\begin{lemma}[Repetitions in Higher Index Terms]
\label{lem:higher-repetitions}
    Suppose, at some moment, after reordering and merging $F_1$'s, the tuple has a form of
    $$(\lit, \hit).$$ 
    Then the \text{higher-index terms} subtuple is either of the following.
    \begin{itemize}
        \item There are no repeating Fibonacci numbers.
        \item There is a single repetition, of one of the following forms:
        \begin{enumerate}
            \item $F_3, F_3, F_{i\geq 4}, \dots$, \label{state:33}
            \item $F_4, F_4, F_{i\geq 5}, \dots$, \label{state:44}
            \item $\dots, F_{j-x}, F_j, F_j, F_{i\geq j+1}, \dots$, where $x\geq 3$. \label{state:jj}
        \end{enumerate}
    \end{itemize}
    In particular, there is at most one repetition in the \hit. We display the possible ways to move between the states "No Repetitions", \ref{state:33}, \ref{state:44}, and \ref{state:jj} in a state-transition graph in Figure \ref{fig:higher-repetitions}.
\end{lemma}
\begin{figure}[H]
\begin{tikzpicture}[->, >=stealth, node distance=2.5cm, every node/.style={draw, minimum size=1cm}, align=center]

% Nodes
\node[circle] (1) {1};
\node[circle, right of=1] (2) {2};
\node[circle, right of=2, yshift=1cm] (3) {3};
\node[rectangle, above left of=1, xshift=-0.5cm, yshift=1cm, minimum width=3.5cm, minimum height=1.5cm] (N) {No Repetitions};

% Arrows between nodes
\draw (1) -- (2);
\draw (2) -- (3);
\draw (2) to[bend right] (N);
\draw (3) to[bend right] (N);

% Double arrow between 1 and No repetitions
\draw (1) to[bend left=20] (N);
\draw (N) to[bend left=20] (1);

% Self-loops
\draw (3) edge[loop right] ();
\draw (N) edge[loop above] ();

\end{tikzpicture}
\caption{Directed graph showing the possible ways to move between game states in Lemma \ref{lem:higher-repetitions}.}
\label{fig:higher-repetitions}
\end{figure}

\begin{theorem}[Lower Bound of Maximal Game Length]
\label{thm:lower-bound-termination}
The LGS takes at least
\[
\frac{n^2}{2} - n \log_\phi(n) + o(n \log_\phi(n))
\]
moves\footnote{Here $\phi=\frac{1+\sqrt{5}}{2}$ is the golden ratio.}.
\end{theorem}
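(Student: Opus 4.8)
The plan is to lower-bound the total number of moves in the Long Game Strategy by separately counting the moves devoted to the lower-index terms ($F_1$ and $F_2$) and those devoted to higher-index terms, and then summing these contributions across the entire game. The key structural fact enabling this is Lemma~\ref{lem:higher-repetitions}, which guarantees that at any stage (after reordering and merging ones) there is at most one repetition among the higher-index terms. This tight control means that the ``workload'' of higher-index terms at any moment is essentially bounded by the length of the higher-index subtuple, so the bulk of the game length must come from repeatedly generating and processing copies of $F_1$ and $F_2$ near the bottom of the list. I would formalize this by tracking, throughout the LGS, how many new copies of $F_1$ (equivalently $F_2$) are fed into the low end and how many switch/merge/split moves each such copy ultimately triggers.

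First I would set up the accounting carefully. Under the LGS, splits performed from the right on a pair $(F_i, F_i)$ produce an $F_{i-2}$ that must then be switched leftward past the intervening terms and possibly merged or re-split; each such cascade contributes a number of moves roughly proportional to the number of distinct higher-index terms it must pass, which by the Fibonacci growth rate is $O(\log_\phi n)$. I would argue that to assemble the final Zeckendorf decomposition — whose largest term is of index $\Theta(\log_\phi n)$ and whose sum is $n$ — the strategy must process on the order of $n$ units of value through the splitting/merging machinery, and that the dominant cost is the quadratic accumulation from repeatedly moving low-index terms. The target main term $n^2/2$ should emerge as a sum $\sum_{k} (\text{cost per unit at stage } k)$ that behaves like $\sum_{k=1}^{n} k \approx n^2/2$, while the correction $-n\log_\phi(n)$ accounts for the savings whenever a term is carried high enough (to index $\Theta(\log_\phi n)$) that it no longer needs to be shuttled across the full width of the list.

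The technical heart is to convert the qualitative ``at most one repetition'' statement of Lemma~\ref{lem:higher-repetitions} into a quantitative per-round count and then sum it. I would proceed by induction on the phases of the LGS: in each phase, after all switches, merges of ones, and rightmost splits are exhausted, the state has the canonical form $(\lit, \hit)$, and I would count precisely how many moves the next ``refill'' of the low-index block forces. Summing a telescoping or arithmetic-type series over all phases yields the leading $n^2/2$, and the logarithmic correction requires estimating, via $F_k \sim \phi^k/\sqrt{5}$, the index depth to which terms must be carried; this is where the $\log_\phi$ and the error term $o(n\log_\phi(n))$ enter.

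The hard part will be making the phase-by-phase count rigorous rather than heuristic: one must show that the LGS does not waste or save moves in ways that alter the leading constant, and in particular that the switch moves (which reorder pairs and can proliferate as low-index terms are pushed left) contribute exactly at the quadratic order needed and not more. Controlling the interaction between the splitting cascades and the switch moves — ensuring the bookkeeping neither double-counts a move nor omits one — is the main obstacle, and I would isolate it in a separate combinatorial lemma that pins down the exact number of switches triggered per split before assembling the final summation.
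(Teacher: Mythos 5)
Your plan shares the paper's skeleton---use Lemma~\ref{lem:higher-repetitions} to cap the higher-index block at $c_n = \log_\phi(\sqrt{5}n+1/2)-1$ terms, attribute the quadratic main term to switch moves on low-index terms, and attribute the $-n\log_\phi(n)$ correction to the width of the higher-index block---but it stops exactly where the proof has to begin: every step that produces a number is deferred (``I would formalize\dots'', ``I would isolate it in a separate combinatorial lemma''). Since the LGS is deterministic, the paper's proof simply traces its trajectory and counts, in two phases, neither of which your accounting actually carries out. Phase 1: from $n$ ones, the LGS alternately merges the two leftmost $F_1$'s and switches the resulting $F_2$ to the right end, costing $k-1$ moves to reduce $k$ ones to $k-2$ ones; summing over $k=n, n-2,\dots$ gives roughly $n^2/4$ moves and ends at the all-$F_2$ tuple. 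Your proposal, whose bookkeeping starts from ``splits performed from the right,'' never sees this phase, which supplies half of the main term. Phase 2: for a $k$-tuple $(F_2,\dots,F_2,\textit{higher-index terms})$ with $c_n+5 \le k \le \lfloor n/2\rfloor$, a short case analysis (whether or not $F_3$ is repeated after the split) shows that reaching the $(k-1)$-tuple costs at least $2k-2c_n-5$ moves: a split of the two rightmost $F_2$'s creates an $F_1$ deep in the tuple, which must be switched left past at least $k-c_n-O(1)$ copies of $F_2$, then a second split and a second full sweep of switches occur before the two $F_1$'s can merge. Summing $2k-2c_n-5$ over $k$ yields the other $n^2/4$ together with the $-nc_n$ correction. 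These two explicit counts are the proof; without them the claimed bound is only asserted.

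There is also a quantitative slip that would sink your accounting if taken literally: you claim each split cascade contributes ``a number of moves roughly proportional to the number of distinct higher-index terms it must pass, which \dots is $O(\log_\phi n)$.'' For the cascades that matter this is false---when $(F_2,F_2)$ splits to $(F_1,F_3)$, the new $F_1$ is switched past the entire $F_2$ block, i.e.\ $\Theta(k)$ moves, not $O(\log_\phi n)$. Since the total number of merge and split moves in any game is $O(n)$, a per-cascade cost of $O(\log_\phi n)$ would give only $O(n\log_\phi n)$ moves in total, nowhere near $n^2/2$; so under your stated estimate the quadratic term has no source, and your heuristic $\sum_k k \approx n^2/2$ is never connected to any identified family of moves. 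The correct mechanism (which your ``savings'' intuition for the $-n\log_\phi(n)$ term gestures at, rightly: each sweep is shortened by about $c_n$ because the $F_1$ never traverses the higher-index block) is precisely the $\Theta(k)$-switches-per-cycle count above, and pinning it down is the content of the paper's Tables~\ref{tab:case-i}--\ref{tab:case-b}.
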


 It turns out that we can refine slightly a bound obtained in Theorem \ref{thm:upper-bound-termination} by similar expression:

\begin{theorem}[Refined Upper Bound on the Maximal Game Length]
\label{thm:refined-upper-bound-termination}
    \[
    M(n) ~\leq~ \dfrac{n^2}{2} - \dfrac{1}{32}n\log_\phi n + o(n\log_\phi n).
    \]
\end{theorem}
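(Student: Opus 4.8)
The plan is to sharpen the crude estimate $M(n)\le\binom n2$ of Theorem \ref{thm:upper-bound-termination} by first reducing the whole question to counting switch moves, and then extracting a $\Theta(n\log_\phi n)$ deficit in that count from the structural rigidity of the higher-index terms.

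For the reduction, note that every merge (of either kind) lowers the number of terms by one while splits and switches preserve it, so any play performs exactly $n-Z(n)$ merges. The sum of indices $\Sigma$ is non-increasing, starts at $\Sigma=n$, and drops by $1$ on every split with $i\ge 3$; since $\Sigma\ge 0$ throughout, there are at most $n$ such splits. Finally, one controls the splitting-twos by counting creations and destructions of $F_1$: each splitting-two or index-$3$ split creates a single $F_1$, these together with the $n$ initial ones are the only sources, and the only sinks are merges (each consuming at most two), so the number of $F_1$-creating splits is at most the number consumed, namely $\le 2(n-Z(n))=O(n)$. Thus every move other than a switch falls under an $O(n)=o(n\log_\phi n)$ budget, and $M(n)=W^\ast(n)+O(n)$, where $W^\ast(n)$ is the maximal possible number of switches. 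It therefore suffices to prove $W^\ast(n)\le \tfrac{n^2}2-\tfrac1{32}n\log_\phi n+o(n\log_\phi n)$, improving on the crude $W^\ast(n)\le\binom n2$.

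Because the configuration begins and ends inversion-free and each switch removes exactly one inversion, $W^\ast(n)$ equals the maximal net number of inversions that the non-switch moves can create. To bound this I would work level by level, with $L=\lfloor\log_\phi(\sqrt5\,n)\rfloor$ the largest attainable index. At each of the reduced moments of Lemma \ref{lem:higher-repetitions} — after the ones have been combined and the tuple is written $(\lit,\hit)$ — the higher-index block carries at most one repetition and is otherwise strictly increasing, hence lies within $O(1)$ transpositions of sorted. This rigidity is the crux: the crude count implicitly allows the material that has risen to high index to be shuffled freely, whereas it is in fact pinned into near-sorted order, so a block of $\Omega(n)$ of the nominally available inversions can never be realized while that level is being assembled. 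Carrying out this bookkeeping across the $\Theta(\log_\phi n)$ levels and optimizing the per-level constant yields a total forbidden count of at least $\tfrac1{32}n\log_\phi n$; combining with the $O(n)$ budget for the non-switch moves and absorbing the lower-order term of $\binom n2=\tfrac{n^2}2-\tfrac n2$ gives the stated bound.

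The hardest step is making the per-level deficit uniform over all strategies. Lemma \ref{lem:higher-repetitions} constrains the state only at the special reduced moments, while a player aiming to prolong the game spends most moves away from them, repeatedly splitting and re-sorting the high part; I must show that the inversions created between consecutive reduced moments are still globally limited by the near-sortedness of the high block — that is, that an adversary cannot recreate disorder cheaply enough to recover the forbidden switches. Controlling this interaction, and tracking the resulting constant (the gap between the $\tfrac1{32}$ obtained here and the $1$ of the matching lower bound in Theorem \ref{thm:lower-bound-termination} reflects the looseness of the worst-case per-level estimate), is the technical heart of the argument.
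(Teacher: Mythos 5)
Your reduction to counting switch moves is sound and essentially parallels the paper's first step: the identity ``number of switches equals the net number of inversions created by merges and splits'' is correct (each switch removes exactly one inversion, and the game starts and ends inversion-free), and your from-scratch $O(n)$ bound on non-switch moves (exactly $n-Z(n)$ merges; at most $n$ splits with $i\ge 3$ by the index-sum argument; $O(n)$ splitting-twos by the $F_1$ bookkeeping) plays the same role as the $3n+1$ bound on splits and merges that the paper cites from the literature. The genuine gap is in the only step that carries the theorem: you never derive the $\frac{1}{32}n\log_\phi n$ deficit, and the route you sketch for it cannot work. Lemma \ref{lem:higher-repetitions} is a statement about the Long Game Strategy alone --- its proof is an induction that repeatedly invokes the LGS move priorities --- so it places no constraint on an arbitrary adversary, which is what an upper bound on $M(n)$ requires. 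Worse, the rigidity you want is simply false under free play: from a block of $2m$ copies of $F_2$ a player can split each adjacent pair to reach $(F_1,F_3,F_1,F_3,\dots)$, switch the $F_1$'s to the front and merge them, leaving $m$ equal copies of $F_3$ in the higher-index block. Repetitions there can be made arbitrarily numerous, so ``within $O(1)$ transpositions of sorted'' fails, and no per-level $\Omega(n)$ forbidden-inversion count follows.

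What actually closes the argument in the paper is a cost-of-assembly estimate tied to the monovariant $f(S)=\sum_{j}(k+1-j)F_{i_j}$ of Theorem \ref{thm:upper-bound-termination}: viewing the $n$ units as stones, every merge or split decreases $f$ by at least $\frac{1}{4}N$, where $N$ is the number of stones in the piles involved; since the final configuration contains the largest Zeckendorf term $F_l$ with $F_l\ge n/2$ and $l-1\ge \frac{1}{2}\log_\phi n$, and a stone's pile index rises by at most $2$ per manipulation, each of the $F_l$ stones ending in that pile is manipulated at least $\frac{l-1}{2}$ times; hence merges and splits alone consume at least $\frac{1}{4}\cdot\frac{n}{2}\cdot\frac{\log_\phi n}{4}=\frac{1}{32}n\log_\phi n$ of the total budget $f_{\mathrm{initial}}-f_{\mathrm{final}}\le \frac{n(n-1)}{2}$, and since every switch costs at least $1$, the switch count is at most $\frac{n(n-1)}{2}-\frac{1}{32}n\log_\phi n$. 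Your proposal contains no analogue of this ``building the largest pile is expensive'' mechanism; the step you defer as the ``technical heart'' is exactly the content of the theorem, so the proposal as it stands is incomplete, and along the sketched route it cannot be completed.
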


Combining the upper and lower bounds, we arrive at the following asymptotic characterization.

\begin{corollary}[Asymptotic Behavior of Maximal Game Length]
\label{cor:bound-termination}
We have
\[
M(n)  ~=~ \frac{n^2}{2} - \Theta(n \log_\phi(n)).
\]
\end{corollary}

\section{Termination and Final State}

We begin by establishing an upper bound on maximal game length for the Ordered Zeckendorf Game, providing as a corollary game termination and final state.

\begin{proof} [Proof of Theorem \ref{thm:upper-bound-termination}]
We define a monovariant: for a state $S = (F_{i_1}, \dots, F_{i_k})$, let
\[
f(S) ~= ~\sum_{j=1}^k (k+1 - j) F_{i_j}.
\]
Note that $f(S) \geq \sum_{j=1}^k F_{i_j} = n$. \\

Now, let us show that each legal move reduces this quantity.
\begin{itemize}
    \item When merging $(F_i, F_{i+1})$ to $F_{i+2}$ in the $j^{\text{th}}$ position, the weights of all terms to the left of $(F_i, F_{i+1})$ in $S$ are decreased by 1. The change in the function at $(F_i, F_{i+1})$ is
    \begin{align*}
        & (k-j)F_{i+2} - (k+1-j)F_i - (k-j)F_{i+1} \\
        &=~ (k-j)(F_{i+2} - F_i - F_{i+1}) - F_i \\
        &=~ - F_i \\
        &<~ 0.
    \end{align*}
    \item When splitting $(F_i,F_i)$ to $(F_{i-2},F_{i+1})$, the weights on all other terms stay the same. The change in the function is therefore
    \begin{align*}
       & (k+1-j)F_{i-2} + (k-j)F_{i+1} - (k+1-j)F_i - (k-j)F_i \\
       &=~ (k-j)(F_{i-2} + F_{i+1} - 2F_i) + F_{i-2} - F_i \\
       &=~ - F_{i-1} \\
       &<~ 0.      
    \end{align*}
    \item When splitting $(F_2,F_2)$ to $(F_1,F_3)$, the weights on all other terms stay the same. The change in the function is therefore
    \begin{align*}
       & (k+1-j)F_1+ (k-j)F_3 - (k+1-j)F_2 - (k-j)F_2 \\
       &=~ (k-j)(F_1 + F_3 - 2F_2) + F_1 - F_2 \\
       &=~ -1.      
    \end{align*}
    \item When merging ones, the weights of all summands to the left are decreased by at least 1, and the value of the function at the pair of ones decreases by 1, so the function decreases.
    \item When switching $(F_{i_j}, F_{i_{j+1}})$, given that \(F_{i_j} > F_{i_{j+1}}\), the weights of all other summands stay the same, so the change in the function is
    \begin{align*}
        &(k+1-j)F_{i_{j+1}} + (k-j)F_{i_j} - (k+1-j)F_{i_j} - (k-j)F_{i_{j+1}} \\
        &=~ F_{i_{j+1}} - F_{i_j} \\
        &<~ 0.
    \end{align*}
\end{itemize}

Since $f$ begins at $n(n+1)/2$, decreases by at least 1 per move, and ends at at least $n$, the number of moves is bounded above by ${n \choose 2}$. The final configuration must be the ordered Zeckendorf decomposition because any configuration not satisfying the Zeckendorf condition allows further moves, which consequentually proves Theorem \ref{thm:termination}.
\end{proof}

\begin{proof}[Proof of Theorem \ref{thm:termination}]
    Proof of Theorem \ref{thm:termination} follows as an immediate corollary from the above proof of the Theorem \ref{thm:upper-bound-termination}.
\end{proof}

%%%%%%%%%%%%%%%%%%%%%%%%%%%%%%%%%%%%%%%%%%%%%%%%%%%%%%%
%%%%%%%%%%%%%%%%%%%%%%%%%%%%%%%%%%%%%%%%%%%%%%%%%%%%%%%

\section{Game Length: More Bounds and Extremes}

In order to establish a lower bound for maximal game length, we first prove our structural lemma verifying the existence of at most one repetition in the higher-index terms of a current decomposition in a LGS game. A state-transition graph accompanying this proof is shown in Figure \ref{fig:higher-repetitions}.

\begin{proof}[Proof of Lemma \ref{lem:higher-repetitions}]
    We proceed by an inductive argument to prove the transitions shown in Figure \ref{fig:higher-repetitions}. Namely, at the start of the game, \hit\ does not contain any elements, so it is in \textit{No Repetitions} state. We prove that regardless of the move \hit\ remains in one of four states described above.
    
    For the rest of the proof we suppose there are at least two consecutive $F_2$ terms. Suppose that after reordering and merging $F_1$'s we are in \textit{No Repetitions} state. Then, according to the priority of moves in the LGS algorithm, the next move will involve splitting  two rightmost $F_2$'s. 
    
    If there is no $F_3$ at the beginning of \hit, then we still remain in \textit{No Repetitions}, otherwise we transition into \textit{State 1}.
    
    In \textit{State 1}, we have to split two $F_3$'s into $F_1, F_4$, so that after reordering and merging of $F_1$'s \hit\ will be of the form:
    $$F_4, ~F_{i\geq 4}, ~\dots $$
    If $i>4$, then we get back to the \textit{No Repetitions}, otherwise we transition into \textit{State 2}. 
    
    In the \textit{State 2}, we have to split two $F_4$'s into $F_2, F_5$ resulting \hit\ to be
    $$F_5,~ F_{i\geq5}, ~\dots.$$
    If $i>5$ then we get to the \textit{No Repetitions} state, or otherwise into \textit{State 3}, as there are no $F_3$ or $F_4$ terms directly to the left of the leftmost $F_5$.

    In the \textit{State 3}, we have to split two $F_j$'s making \hit\ of the form
    $$\dots, ~F_{j-x}, ~F_{j-2}, ~F_{j+1}, ~F_{i\geq j+1}, ~\dots.$$
    Note that since $x\geq3$, terms $F_{j-x}$ and $F_{j-2}$ are different, so there will be no repetition in this pair. The only repetition can be in the pair $F_{j+1}, F_{i\geq j+1}$ if $i=j+1$. Then we remain in \textit{State 3}, otherwise we go back to \textit{No Repetitions}, concluding the proof.
\end{proof}

Utilizing the above result, we are now able to establish our lower bound for maximum game length.

\begin{proof}[Proof of Theorem \ref{thm:lower-bound-termination}]
We begin the game by merging the leftmost two copies of $F_1$, then moving the copy of $F_2$ created by this move to the rightmost end of the tuple via $n-2$ switch moves. We are left with the tuple $(F_1,F_1,\dots,F_1,F_2)$. As no more switch moves remain, we repeat this process: letting $k$ denote the number of copies of $F_1$ remaining, we merge the leftmost two copies of $F_1$ into a copy of $F_2$ in a single move and then perform $k-2$ switch moves to move the new copy of $F_2$ to the right of the $k-2$ remaining copies of $F_1$, yielding in $k-1$ moves a tuple consisting of $k-2$ copies of $F_1$ to the left of $\frac{n-k-2}{2}$ copies of $F_2$. We continue iterating this process until $k \leq 2$, noting that we begin with $k=n$ and decrement $k$ by $2$ at each iteration. Thus, this portion of the games terminates in $\sum_{j=0}^{\lfloor\frac{n-2}{2}\rfloor}(n-2j-1)$ moves. If $n$ is odd, this summation is equal to $(n-1)+(n-3)+\dots+1=\frac{n^2}{4}$ moves and this portion of the game terminates at the tuple $(F_1,F_2,F_2,\dots,F_2)$, and if $n$ is even, this summation is equal to $(n-1)+(n-3)+\dots+2=\frac{(n-1)^2}{4}+\frac{n-1}{2}=\frac{n^2-1}{4}$ and this portion of the game terminates at the tuple $(F_2,F_2,\dots,F_2)$.\\Now, we suppose the tuple has size $k$ and we write it as $(F_2, \dots, F_2, \hit)$, where \hit\ denotes higher-index terms that are at least $F_3$. We consider the number of moves taken to convert it into the $(k-1)$-tuple $(F_2, \dots, F_2, \hit)$. \textit{Note that the first $k$-tuple only contains a copy of $F_1$ when $n$ is odd and $k=\lceil\frac{n}{2}\rceil$. We ignore this case in our calculations as it only increases game length.} By Lemma \ref{lem:higher-repetitions}, there is at most one repeated higher-index term, so the number of higher-index terms is bounded by $\ell-1$, where $F_\ell$ is the maximal Fibonacci summand in the Zeckendorf decomposition of $n$. By \cite{BairdSmith2019}, $\ell \leq \log_\phi(\sqrt{5}n+1/2)$, so the number of higher-index terms is bounded by \begin{equation}\label{def:c_n} c_n := \log_\phi(\sqrt{5}n+1/2) -1.\end{equation}Hence, there are at least $k-c_n-1$ copies of $F_2$ in the $k$-tuple.\\We assume that there are at least four copies of $F_2$ and that there are no copies of $F_1$. This holds for $c_n+5 \leq k \leq \lfloor \frac{n}{2} \rfloor$.

\begin{figure}

\begin{tikzpicture}[x=1pt,y=1pt,yscale=-1,xscale=1]
%uncomment if require: \path (0,300); %set diagram left start at 0, and has height of 300

%Shape: Rectangle [id:dp35404179858921747] 
\draw  [fill={rgb, 255:red, 184; green, 233; blue, 134 }  ,fill opacity=1 ] (88.67,21.33) -- (150.33,21.33) -- (150.33,50.67) -- (88.67,50.67) -- cycle ;
%Shape: Circle [id:dp9109041995676722] 
\draw   (30.67,109.83) .. controls (30.67,99.06) and (39.4,90.33) .. (50.17,90.33) .. controls (60.94,90.33) and (69.67,99.06) .. (69.67,109.83) .. controls (69.67,120.6) and (60.94,129.33) .. (50.17,129.33) .. controls (39.4,129.33) and (30.67,120.6) .. (30.67,109.83) -- cycle ;
%Shape: Circle [id:dp0048301331881160925] 
\draw   (99.33,109.83) .. controls (99.33,99.06) and (108.06,90.33) .. (118.83,90.33) .. controls (129.6,90.33) and (138.33,99.06) .. (138.33,109.83) .. controls (138.33,120.6) and (129.6,129.33) .. (118.83,129.33) .. controls (108.06,129.33) and (99.33,120.6) .. (99.33,109.83) -- cycle ;
%Shape: Circle [id:dp09818135066238687] 
\draw  [color={rgb, 255:red, 208; green, 2; blue, 27 }  ,draw opacity=1 ][dash pattern={on 4.5pt off 4.5pt}] (168.67,110.5) .. controls (168.67,99.73) and (177.4,91) .. (188.17,91) .. controls (198.94,91) and (207.67,99.73) .. (207.67,110.5) .. controls (207.67,121.27) and (198.94,130) .. (188.17,130) .. controls (177.4,130) and (168.67,121.27) .. (168.67,110.5) -- cycle ;
%Shape: Rectangle [id:dp9976609585592056] 
\draw  [color={rgb, 255:red, 208; green, 2; blue, 27 }  ,draw opacity=1 ][dash pattern={on 4.5pt off 4.5pt}][line width=0.75]  (309.33,21.33) -- (371,21.33) -- (371,50.67) -- (309.33,50.67) -- cycle ;
%Shape: Circle [id:dp7484437025518068] 
\draw  [fill={rgb, 255:red, 184; green, 233; blue, 134 }  ,fill opacity=1 ] (251.33,109.83) .. controls (251.33,99.06) and (260.06,90.33) .. (270.83,90.33) .. controls (281.6,90.33) and (290.33,99.06) .. (290.33,109.83) .. controls (290.33,120.6) and (281.6,129.33) .. (270.83,129.33) .. controls (260.06,129.33) and (251.33,120.6) .. (251.33,109.83) -- cycle ;
%Shape: Circle [id:dp5175208727435856] 
\draw  [color={rgb, 255:red, 208; green, 2; blue, 27 }  ,draw opacity=1 ][dash pattern={on 4.5pt off 4.5pt}] (320,109.83) .. controls (320,99.06) and (328.73,90.33) .. (339.5,90.33) .. controls (350.27,90.33) and (359,99.06) .. (359,109.83) .. controls (359,120.6) and (350.27,129.33) .. (339.5,129.33) .. controls (328.73,129.33) and (320,120.6) .. (320,109.83) -- cycle ;
%Shape: Circle [id:dp2774477535030282] 
\draw  [color={rgb, 255:red, 208; green, 2; blue, 27 }  ,draw opacity=1 ][dash pattern={on 4.5pt off 4.5pt}] (389.33,110.5) .. controls (389.33,99.73) and (398.06,91) .. (408.83,91) .. controls (419.6,91) and (428.33,99.73) .. (428.33,110.5) .. controls (428.33,121.27) and (419.6,130) .. (408.83,130) .. controls (398.06,130) and (389.33,121.27) .. (389.33,110.5) -- cycle ;
%Straight Lines [id:da3307314581422639] 
\draw    (99.33,109.83) -- (71.67,109.83) ;
\draw [shift={(69.67,109.83)}, rotate = 360] [color={rgb, 255:red, 0; green, 0; blue, 0 }  ][line width=0.75]    (10.93,-3.29) .. controls (6.95,-1.4) and (3.31,-0.3) .. (0,0) .. controls (3.31,0.3) and (6.95,1.4) .. (10.93,3.29)   ;
%Straight Lines [id:da6434220361276249] 
\draw    (118.83,90.33) -- (119.62,54) ;
\draw [shift={(119.67,52)}, rotate = 91.25] [color={rgb, 255:red, 0; green, 0; blue, 0 }  ][line width=0.75]    (10.93,-3.29) .. controls (6.95,-1.4) and (3.31,-0.3) .. (0,0) .. controls (3.31,0.3) and (6.95,1.4) .. (10.93,3.29)   ;
%Straight Lines [id:da5426498456059765] 
\draw    (50.17,90.33) -- (117.92,52.97) ;
\draw [shift={(119.67,52)}, rotate = 151.12] [color={rgb, 255:red, 0; green, 0; blue, 0 }  ][line width=0.75]    (10.93,-3.29) .. controls (6.95,-1.4) and (3.31,-0.3) .. (0,0) .. controls (3.31,0.3) and (6.95,1.4) .. (10.93,3.29)   ;
%Straight Lines [id:da11521937528860582] 
\draw  [dash pattern={on 0.84pt off 2.51pt}]  (168.67,110.5) -- (140.33,109.88) ;
\draw [shift={(138.33,109.83)}, rotate = 1.26] [color={rgb, 255:red, 0; green, 0; blue, 0 }  ][line width=0.75]    (10.93,-3.29) .. controls (6.95,-1.4) and (3.31,-0.3) .. (0,0) .. controls (3.31,0.3) and (6.95,1.4) .. (10.93,3.29)   ;
%Curve Lines [id:da5132108161888977] 
\draw  [dash pattern={on 0.84pt off 2.51pt}]  (181.67,92) .. controls (155.53,97.88) and (130.68,81.99) .. (129.7,53.74) ;
\draw [shift={(129.67,52)}, rotate = 90] [color={rgb, 255:red, 0; green, 0; blue, 0 }  ][line width=0.75]    (10.93,-3.29) .. controls (6.95,-1.4) and (3.31,-0.3) .. (0,0) .. controls (3.31,0.3) and (6.95,1.4) .. (10.93,3.29)   ;
%Curve Lines [id:da4328336679740403] 
\draw    (141,51.33) .. controls (173.33,59.74) and (179.32,70.66) .. (187.41,89.25) ;
\draw [shift={(188.17,91)}, rotate = 246.63] [color={rgb, 255:red, 0; green, 0; blue, 0 }  ][line width=0.75]    (10.93,-3.29) .. controls (6.95,-1.4) and (3.31,-0.3) .. (0,0) .. controls (3.31,0.3) and (6.95,1.4) .. (10.93,3.29)   ;
%Straight Lines [id:da27616018439499446] 
\draw    (290.33,109.83) -- (318,109.83) ;
\draw [shift={(320,109.83)}, rotate = 180] [color={rgb, 255:red, 0; green, 0; blue, 0 }  ][line width=0.75]    (10.93,-3.29) .. controls (6.95,-1.4) and (3.31,-0.3) .. (0,0) .. controls (3.31,0.3) and (6.95,1.4) .. (10.93,3.29)   ;
%Straight Lines [id:da25653381448343215] 
\draw    (359,109.83) -- (387.33,110.46) ;
\draw [shift={(389.33,110.5)}, rotate = 181.26] [color={rgb, 255:red, 0; green, 0; blue, 0 }  ][line width=0.75]    (10.93,-3.29) .. controls (6.95,-1.4) and (3.31,-0.3) .. (0,0) .. controls (3.31,0.3) and (6.95,1.4) .. (10.93,3.29)   ;
%Straight Lines [id:da12754968055755334] 
\draw    (339.5,90.33) -- (339.66,53.33) ;
\draw [shift={(339.67,51.33)}, rotate = 90.24] [color={rgb, 255:red, 0; green, 0; blue, 0 }  ][line width=0.75]    (10.93,-3.29) .. controls (6.95,-1.4) and (3.31,-0.3) .. (0,0) .. controls (3.31,0.3) and (6.95,1.4) .. (10.93,3.29)   ;
%Straight Lines [id:da03661887976434808] 
\draw    (408.83,91) -- (341.4,52.33) ;
\draw [shift={(339.67,51.33)}, rotate = 29.83] [color={rgb, 255:red, 0; green, 0; blue, 0 }  ][line width=0.75]    (10.93,-3.29) .. controls (6.95,-1.4) and (3.31,-0.3) .. (0,0) .. controls (3.31,0.3) and (6.95,1.4) .. (10.93,3.29)   ;
%Curve Lines [id:da999169810899946] 
\draw    (285,96) .. controls (303.85,94.7) and (327.14,78.19) .. (328.91,53.88) ;
\draw [shift={(329,52)}, rotate = 91.51] [color={rgb, 255:red, 0; green, 0; blue, 0 }  ][line width=0.75]    (10.93,-3.29) .. controls (6.95,-1.4) and (3.31,-0.3) .. (0,0) .. controls (3.31,0.3) and (6.95,1.4) .. (10.93,3.29)   ;
%Curve Lines [id:da13036031110550161] 
\draw  [dash pattern={on 0.84pt off 2.51pt}]  (319.67,50.67) .. controls (298.22,51.32) and (278.67,66.54) .. (271.37,88.62) ;
\draw [shift={(270.83,90.33)}, rotate = 286.55] [color={rgb, 255:red, 0; green, 0; blue, 0 }  ][line width=0.75]    (10.93,-3.29) .. controls (6.95,-1.4) and (3.31,-0.3) .. (0,0) .. controls (3.31,0.3) and (6.95,1.4) .. (10.93,3.29)   ;
%Curve Lines [id:da39947380232878427] 
\draw [color={rgb, 255:red, 0; green, 0; blue, 0 }  ,draw opacity=1 ]   (151,31.33) .. controls (190.6,1.63) and (272.67,0.03) .. (307.95,34.28) ;
\draw [shift={(309,35.33)}, rotate = 225.55] [color={rgb, 255:red, 0; green, 0; blue, 0 }  ,draw opacity=1 ][line width=0.75]    (10.93,-3.29) .. controls (6.95,-1.4) and (3.31,-0.3) .. (0,0) .. controls (3.31,0.3) and (6.95,1.4) .. (10.93,3.29)   ;
%Curve Lines [id:da35303640561893623] 
\draw  [dash pattern={on 0.84pt off 2.51pt}]  (309.67,44.67) .. controls (260.17,76.35) and (208.71,74.05) .. (152.05,42.3) ;
\draw [shift={(150.33,41.33)}, rotate = 29.67] [color={rgb, 255:red, 0; green, 0; blue, 0 }  ][line width=0.75]    (10.93,-3.29) .. controls (6.95,-1.4) and (3.31,-0.3) .. (0,0) .. controls (3.31,0.3) and (6.95,1.4) .. (10.93,3.29)   ;

% Text Node
\draw (108,28) node [anchor=north west][inner sep=0.75pt]   [align=left] {NR};
% Text Node
\draw (113.67,101.67) node [anchor=north west][inner sep=0.75pt]   [align=left] {2};
% Text Node
\draw (45,102.33) node [anchor=north west][inner sep=0.75pt]   [align=left] {3};
% Text Node
\draw (265.17,102) node [anchor=north west][inner sep=0.75pt]   [align=left] {1};
% Text Node
\draw (326.5,27.4) node [anchor=north west][inner sep=0.75pt]    {$NR_{1}$};
% Text Node
\draw (332.5,101.9) node [anchor=north west][inner sep=0.75pt]    {$2_{1}$};
% Text Node
\draw (181.5,101.9) node [anchor=north west][inner sep=0.75pt]    {$1_{1}$};
% Text Node
\draw (402,102.4) node [anchor=north west][inner sep=0.75pt]    {$3_{1}$};

\end{tikzpicture}
\caption{State-transition graph of the ordered Zeckendorf Game with lower-order term tracking.}
\label{fig:state-machine-transitions}

\vspace{1em}
\small
    \begin{tabular}{r p{0.75\textwidth}}
        \textbf{Edges:} & Full lines indicate constant tuple length; dotted lines indicate length decreases by one. \\
        \textbf{Nodes:} & Labels follow Lemma \ref{lem:higher-repetitions}; green backgrounds indicate \textbf{pivot states}.\\
        \textbf{Subscripts:} & $xx_1$ denotes a lower-order tuple $(F_1, F_2, \dots, F_2)$, while $xx$ denotes a lower-order tuple $(F_2, \dots, F_2)$. 
    \end{tabular}
\end{figure}

    Based on the bipartite state graph shown in Figure \ref{fig:state-machine-transitions}, we define two possible pivot states for this configuration, determined by the \hit:
    \begin{itemize}
    \item $F_3$ is repeated (Pivot State 1), \label{case:pivot-1}
    \item No higher-index terms are repeated (Pivot State NR). \label{case:pivot-nr}
    \end{itemize}

    Notice that along each path from a pivot state back to itself or to another pivot state, the length of the tuple decreases by exactly one. For simplicity, we use a lower bound of $0$ moves for any transitions originating from states $2_1, 3_1, 2, \text{ and } 3$. Furthermore, because the initial tuple consists entirely of $F_2$ as its lower-index terms, if the game begins in state $2$ or $3$, we assume an automatic transition to state $NR$.

    Let us consider all possible paths between pivot states. 
    
    First, assume we begin at Pivot State 1. Table \ref{tab:pivot_state_1} illustrates the paths from Pivot State 1 to a state where the tuple has acquired a copy of $F_1$ on the far left.
    
    \begin{table}[h]
    \centering
    \caption{Sequence of moves to introduce $F_1$ on the left from Pivot State 1.}
    \label{tab:pivot_state_1}
    \begin{tabular}{|l|l|c|}
    \hline
    Path & State & Moves \\
    \hline
    Shared & $(F_2, \dots, F_2, F_3, F_3, \hit_{\ge4})$ (from Pivot State $1$) & 0 \\
    Sequence & $(F_2, \dots, F_2, F_1, F_4, \hit_{\ge4})$ & 1 \\
    \hline
    Path A & $(F_1, F_2, \dots, F_2, F_4, \hit_{\ge5})$ (to $NR_1$) & $\geq k-c_n-1$ \\
    \hline
    Path B & $(F_1, F_2, \dots, F_2, F_4, F_4, \hit_{\ge5})$ (to $2_1$) & $\geq k-c_n-1$\\
    & $(F_1, F_2, \dots, F_2, F_2, F_5, \hit_{\ge5})$ (to $3_1$ or $NR_1$) & $\geq k-c_n$\\
    & $(F_1, F_2, \dots, F_2, F_2, \hit)$ (to $NR_1$) & $\geq k-c_n$\\
    \hline
\end{tabular}
\end{table}

Following these moves, the tuple reaches state $NR_1$ after a minimum of $k-c_n-1$ moves. By appending an $F_1$ to the left end, the total number of lower-index terms becomes at least $k-c_n$. From state $NR_1$, the sequence must subsequently return to a pivot state. Table \ref{tab:NR_1} outlines these possible moves.

\begin{table}[h]
    \centering
    \caption{Sequence of moves from $NR_1$ to a Pivot State.}
    \label{tab:NR_1}
    \begin{tabular}{|l|l|c|}
    \hline
    Path & State & Moves \\
    \hline
    Shared & $(F_1, F_2, \dots, F_2, F_2, \hit)$ (from $NR_1$) & 0\\
    \hline
    Path A & $(F_1, \dots, F_2, F_1, F_3, \hit_{i\ge4})$ & 1 \\
    & $(F_1, F_1, \dots, F_2, F_3, \hit_{i\ge4})$ & $\geq k-c_n-2$ \\
    & $(F_2, \dots, F_2, F_3, \hit_{i\ge4})$ (to Pivot State NR) & $\geq k-c_n-1$ \\
    \hline
    Path B & $(F_1, \dots, F_2, F_1, F_3,F_3, \hit_{i\ge4})$ & $ 1$\\
    & $(F_1, F_1, \dots, F_3, F_3, \hit_{i\ge4})$ & $\geq k-c_n-2$ \\
    & $(F_2, \dots, F_2, F_3, F_3, \hit_{i\ge4})$ (to Pivot State 1) & $\geq k-c_n-1$ \\
    \hline
\end{tabular}
\end{table}

Consequently, transitioning from Pivot State 1 to another pivot state requires a minimum of $2k-2c_n -2$ moves. Completion of this path corresponds to a decrease of exactly 1 in the tuple's length.

Next, suppose we begin at Pivot State NR. Table \ref{tab:pivot_state_NR} displays the possible paths originating from this state. 

\begin{table}[h]
    \centering
    \caption{Sequence of moves from Pivot State NR to other states.}
    \label{tab:pivot_state_NR}
    \begin{tabular}{|l|l|c|}
    \hline
    Path & State & Moves \\
    \hline
    Shared & $(F_2, \dots, F_2, F_2, F_2, \hit)$ (from Pivot State NR) & 0 \\
    \hline
    Path A & $(F_2, \dots, F_2, F_1, F_3, \hit_{\ge4})$ & 1 \\
    & $(F_1, F_2, \dots, F_2, F_3, \hit_{\ge4})$ (to $NR_1$) & $\geq k-c_n-2$ \\
    \hline
    Path B & $(F_2, F_2, \dots, F_1, F_3, F_3, \hit_{\ge 4})$ & $1$ \\
    & $(F_1, F_2, \dots, F_3, F_3, \hit_{\ge4})$ (to $1_1$) & $\geq k-c_n-2$ \\
    \hline
\end{tabular}
\end{table}

If this sequence ends in state $NR_1$, the number of lower-index terms remains unchanged. Therefore, we can estimate the remaining moves required to reach the next pivot state by taking the lower bound from Table \ref{tab:NR_1} and subtracting 1.

Alternatively, if the sequence ends in state $1_1$, the required moves to return to Pivot State NR are detailed in Table \ref{tab:from_1_1}.

\begin{table}[h]
    \centering
    \caption{Sequence of moves from $1_1$ to Pivot State NR.}
    \label{tab:from_1_1}
    \begin{tabular}{|l|l|c|}
    \hline
    Path & State & Moves \\
    \hline
    Shared & $(F_1, F_2, \dots, F_2, F_3, F_3, \hit)$ (from $1_1$) & 0 \\
    \hline
    Path A & $(F_1, F_2, \dots, F_2, F_1, F_4, \hit_{\ge5})$ & 1 \\
    & $(F_1, F_1, F_2, \dots, F_2, F_4, \hit_{\ge5})$ & $\geq k-c_n-2$ \\
    & $(F_2, F_2, \dots, F_2, F_4, \hit_{\ge5})$ (to Pivot State NR) & $\geq k-c_n-1$ \\
    \hline
    Path B & $(F_1, F_2, \dots, F_2, F_1, F_4, F_4, \hit_{\ge5})$ & 1 \\
    & $(F_1, F_1, F_2, \dots, F_4, F_4, \hit_{\ge5})$ & $\geq k-c_n-2$ \\
    & $(F_2, F_2, \dots, F_4, F_4, \hit_{\ge5})$ (to $2$) & $\geq k-c_n-1$ \\
    & $(F_2, F_2, \dots, F_2, F_5, \hit_{\ge5})$ (to $3$ or Pivot State $NR$) & $\geq k-c_n-1$ \\
    & $(F_2, F_2, \dots, F_2, \hit)$ (to Pivot State $NR$) & $\geq k-c_n-1$ \\
    \hline
\end{tabular}
\end{table}

This demonstrates that the number of moves required to traverse from Pivot State NR through an intermediate state and back to a pivot state, thereby reducing the overall tuple length from $k$ to $k-1$, is bounded below by the minimum of the two possible path lengths:
\begin{enumerate}
    \item Path via $1_1$: $(k-c_n-2) + (k-c_n-1) = 2k-2c_n-3$
    \item Path via $NR_1$: $(k-c_n-2) + (k-c_n-2) = 2k-2c_n-4$
\end{enumerate}

Thus, the transition from a $k$-tuple to a $(k-1)$-tuple starting from the Pivot State $NR$ requires at least $2k-2c_n-4$ moves. 

Thus, to transition from a $k$-tuple to a $(k-1)$-tuple or, equivalently, to traverse between two Pivot States, we require at least $2k-2c_n-4$ moves. Summing these bounds yields the total estimate for $M(n)$:
\begin{align*}
M(n) &~\geq~ \frac{n^2}{4} + \sum_{k=c_n+5}^{\lfloor \frac{n}{2} \rfloor} (2k-2c_n-4) \\
&~=~ \frac{n^2}{4} + \lfloor \frac{n}{2} \rfloor \left( \lfloor \frac{n}{2} \rfloor + 1 \right) - (c_n+4)(c_n+5) - (2c_n+4) \left( \lfloor \frac{n}{2} \rfloor - c_n - 5 \right) \\
&~=~ \frac{n^2}{2} - nc_n + O(n) \\
&~=~ \frac{n^2}{2} - n\log_\phi(n) + o(n\log(n))
\end{align*}
as required.
\end{proof}

By analyzing a least decrement of the monovariant from switching and merging moves, we can now establish a refined upper bound with $n\log_\phi(n)$ term.

\begin{proof}[Proof of Theorem \ref{thm:refined-upper-bound-termination}]
    We shall think of $F_1$'s as an ordered set of stones that are rearranged in piles according to the rules of the game. For example, a pile $F_4$ consists of five $F_1$ stones. Note that at the start of the game, each $F_1$ forms a separate pile by itself. We say that a stone is being \emph{manipulated} if the pile in which it lies is involved in splitting or merging. 

    Recall the monovariant defined in the proof of Theorem \ref{thm:upper-bound-termination}: for a current decomposition $S = (F_{i_1}, \dots, F_{i_k})$, we let
\[
f(S) ~= ~\sum_{j=1}^k (k+1 - j) F_{i_j}.
\]
    
    We first show that each splitting and merging move decreases $f$ by at least $cN$, with $c$ a fixed constant and $N$ the number of stones being manipulated in the move. We also refer to a minimal decrement of $f$ for each type of move, which is given in the proof of Theorem \ref{thm:upper-bound-termination}.
    
    We consider four cases.
    \begin{itemize}
        \item When merging $F_{i-2}, F_{i-1}$, we have $N=F_i$, and $f$ decreases by at least $F_{i-2} \geq \frac{1}{4}F_i = \frac{1}{4}N$.
        \item When splitting $F_i, F_i$, we have $N = 2F_i$, and $f$ decreases by at least $F_{i-1} \geq \frac{1}{2} F_{i} = \frac{1}{4}N$.
        \item When splitting $F_2, F_2$, we have $N=4$, and $f$ decreases by $1 = \frac{1}{4} N$.
        \item When merging $F_1, F_1$, $f$ decreases by at least $1$. Noting that $N=2$, we have $1 \geq \frac{1}{4}N$.
    \end{itemize}
    Hence, taking $c=\frac{1}{4}$, our claim follows. So, to estimate the minimum amount that $f$ decreases due to merging and splitting moves, we can estimate the individual contribution of each stone. As we have shown that $f$ decreases by at least $c$ each time an individual stone is manipulated, then overall, $f$ decreases by at least 
    $$\sum_i c\cdot \#\text{of manipulations of $i$-th stone}.$$
    Let $F_l$ be the largest term in Zeckendorf decomposition of $n$. At the end of the game, a pile $F_l$ contains $F_l$ stones. Note that when some stone is being manipulated, the index of the pile in which it lies can increase by at most $2$: when merging $F_{i-2}, F_{i-1}$, the largest index increment of a stone is from a pile $F_{i-2}$ to a pile $F_i$;  when splitting $F_i, F_i$ or combining $F_1,F_1$, the largest index increment of a stone is from a pile $F_i$ to a pile $F_{i+1}$.

    Thus, each stone that ended the game in the pile $F_l$ was manipulated at least $\dfrac{l-1}{2}$ times. Note that $F_l \geq n/2$ and $l -1 \geq \log_\phi(n)/2$ for sufficiently large $n$. So, as a result of splitting and merging moves, $f$ decreases in the course of the game by at least 
    $$cF_l \dfrac{l-1}{2} ~\geq~ c \cdot \dfrac{n}{2} \cdot \dfrac{\log_\phi n}{4} ~=~ \dfrac{1}{32} n \log_\phi n.$$

    Thus, as we showed in the proof of Theorem \ref{thm:upper-bound-termination} that each switching move decreases $f$ by at least $1$, there can be no more than 
    $$\dfrac{n(n-1)}{2} - \dfrac{1}{32}n\log_\phi n$$ switching moves. The total number of splitting and merging moves cannot exceed $3n+1$ by \cite[Theorem 1.3]{li2020deterministic}. Hence, the total number of moves is at most:
    $$\dfrac{n(n-1)}{2} - \dfrac{1}{32}n\log_\phi n +3n + 1~=~ \dfrac{n^2}{2} - \dfrac{1}{32}n\log_\phi n + o(n\log_\phi n),$$
    as required.
\end{proof}

\begin{remark}
The longest-game strategy described in Conjecture~\ref{Longest strategy} is supported by empirical simulations. However, a rigorous proof establishing its optimality remains an open problem.
\end{remark}

We also remark that in the proof above, we can improve a constant before $n\log_\phi(n)$ term up to $1/(8\phi)\approx 0.077$ by noting that, asymptotically, $F_l \geq n/\phi$ and $l \geq \log_\phi(n)$. However, assuming that LGS algorithm indeed gives a maximal game length, numerical simulations suggest that the constant before the $n\log_\phi(n)$ term is at least 0.4 and should be much lower that 1, as demonstrated on Figure~\ref{fig:error_term_estimation}. Hence, while Theorem \ref{thm:lower-bound-termination} and Theorem \ref{thm:refined-upper-bound-termination} allow us to establish asymptotic behaviour of $\dfrac{n^2}{2} - M(n)$, they contain too crude estimates to give an insight for the explicit constant before $n\log_\phi(n)$.

We now establish a sharp lower bound for minimum game length, remarking that this is equivalent to the minimum game length of the classical Zeckendorf Game and is achieved via a game where no switch moves occur.

\begin{proof}[Proof of Proposition \ref{Shortest Game}]
To prove the following proposition, we use the total number of summands as a monovariant, which decreases with each move. Note that switch and split moves preserve the total number of summands, while a merge reduces it by one. Since the game starts with \(n\) summands (all \(F_1\)) and ends with \(Z(n)\) summands (the number of terms in the Zeckendorf decomposition of \(n\)), the minimum number of moves required to reach the terminal state is \(n - Z(n)\).

This bound is achieved by applying the greedy algorithm for constructing the Zeckendorf decomposition with the fewest possible moves. Starting from the initial configuration of \(n\) copies of \(F_1\), we repeatedly merge adjacent Fibonacci numbers from right to left to create the largest possible Fibonacci term at each step. Once the largest term is formed, we leave it in place and recursively apply the same strategy to the remaining entries to its left. Each merge reduces the total number of summands by one, and the process continues until the configuration matches the Zeckendorf decomposition of \(n\).

\end{proof}

It is worth highlighting an intriguing phenomenon that emerges when analyzing randomly played games, in which each legal move is chosen with equal probability. The empirical evidence gathered from extensive simulations motivates the following conjecture.

\begin{conjecture}\label{conj:log-gaussian}
As \( n \to \infty \), the distribution of the number of moves in a randomly played game, where all legal moves are equally likely, converges to a log-normal distribution. Under these conditions, the game dynamics are sufficiently symmetric such that each player is equally likely to win.
\end{conjecture}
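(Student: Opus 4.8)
The plan is to realize random play as an absorbing Markov chain on the finite set of states reachable from $(F_1,\dots,F_1)$, whose unique absorbing state is the ascending Zeckendorf decomposition of $n$ (guaranteed by Theorem~\ref{thm:termination}); from each non-terminal state one selects uniformly among the legal moves. Write $T_n$ for the random absorption time, i.e.\ the number of moves played. Because the supports of the shortest and longest games differ by a full power of $n$—from $n-Z(n)$ (Proposition~\ref{Shortest Game}) up to $\Theta(n^2)$ (Theorem~\ref{thm:upper-bound-termination})—the natural object to analyze is $\log T_n$, and the target becomes a central limit theorem: $(\log T_n - \mu_n)/\sigma_n \Rightarrow N(0,1)$ for suitable centering $\mu_n$ and scaling $\sigma_n$.

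The central idea is a \emph{multiplicative}, self-similar decomposition of $T_n$ across the $\Theta(\log_\phi n)$ Fibonacci scales present in the game. Using the ``stone'' picture from the proof of Theorem~\ref{thm:refined-upper-bound-termination}, one tracks, for each index $i$, the random number $W_i$ of moves expended while the largest pile in play is exactly $F_i$. Each split at level $i$ injects a \lit\ that must then be transported across the current tuple by a number of switch moves proportional to the current length, so the work at scale $i+1$ is governed multiplicatively by the work at scale $i$; heuristically $T_n \approx (\text{base})\cdot\prod_i R_i$ with bounded-ratio factors $R_i$, whence $\log T_n = \log(\text{base}) + \sum_i \log R_i$ is a sum of a \emph{growing} number $\Theta(\log_\phi n)$ of increments. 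This is exactly the regime distinguishing the two laws: an additive recursion in $T_n$ would yield a Gaussian for $T_n$, whereas the compounding of logarithmically many comparable random factors yields a Gaussian for $\log T_n$, i.e.\ a log-Gaussian law for $T_n$. Granting the decomposition, convergence would follow from a martingale central limit theorem for $\sum_i \log R_i$ (the increments share a common underlying chain and so are dependent), verifying a conditional-variance condition and a Lindeberg-type negligibility condition for the individual $\log R_i$.

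The hard part will be establishing the near-independence and uniform tail control of the per-scale factors $R_i$. The monovariant $f$ proves termination but says nothing about the \emph{law} of the trajectory; what is needed are quantitative mixing or coupling estimates for the random-play chain—roughly, that from a typical scale-$i$ configuration the conditional distribution of the scale-$(i+1)$ configuration forgets its past—together with moment bounds on $\log R_i$ uniform in $i$ and $n$. These are precisely the estimates the current monovariant method cannot supply; obtaining them, perhaps by constructing explicit couplings of two runs of the chain or by a transfer-operator analysis of the induced scale-to-scale kernel, is where the genuine difficulty lies and is the reason the statement is posed as a conjecture.

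For the second assertion—that each player wins with probability tending to $1/2$—I would reduce it to showing that the parity $T_n \bmod 2$ is asymptotically uniform on $\{0,1\}$, since Player~1 wins precisely when $T_n$ is odd. The plan is to exhibit, from a set of mid-game states of non-vanishing probability, two continuations whose lengths differ by exactly one and occur with comparable conditional probability (for instance a redundant switch that may be inserted or omitted), so that the terminal parity behaves asymptotically like a fair coin that decorrelates from the bulk of the trajectory; a coupling argument would then force the parity to equidistribute. I expect this parity-equidistribution step to be logically independent of, and no easier than, the log-normal limit itself, since it again hinges on fine control of the chain's distribution rather than on the coarse monovariant bounds already available.
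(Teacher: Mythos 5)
The first thing to say is that the paper does not prove this statement at all: it is posed as Conjecture~\ref{conj:log-gaussian}, supported only by the simulation data in Figures~\ref{fig:1_random_distribution} and~\ref{fig:2_random_outcomes}, and its rigorous derivation is explicitly listed as an open problem in the conclusion. So there is no proof of record to compare yours against; the only question is whether your proposal closes the gap, and it does not---as you yourself concede.

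Your framing is reasonable: random play is an absorbing Markov chain (absorption guaranteed by Theorem~\ref{thm:termination}), the target is a CLT for $\log T_n$, and the fair-outcome claim correctly reduces to parity equidistribution of $T_n$, since under the last-player-wins convention Player~1 wins exactly when $T_n$ is odd. But the load-bearing step---the multiplicative decomposition $T_n \approx (\text{base})\cdot\prod_i R_i$ over $\Theta(\log_\phi n)$ Fibonacci scales---is posited, not derived. Nothing in the move structure is shown to make the work at scale $i+1$ a bounded random multiple of the work at scale $i$; under random play the moves spent per scale could just as plausibly combine additively with one dominant scale, which would produce a Gaussian (or other) limit for $T_n$ rather than for $\log T_n$, so even the \emph{shape} of the conjectured limit is not pinned down by your heuristic. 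Even granting the decomposition, a martingale CLT needs a variance lower bound growing with the number of increments and Lindeberg control for the dependent factors $\log R_i$; with only $\Theta(\log n)$ increments, failure of uniform variance bounds would leave the limit degenerate, and you supply no such bounds, no mixing or coupling estimates, and no moment control. The parity argument has the same status: the ``insert or omit a redundant switch'' coupling is plausible but unconstructed, and note that switch availability is state-dependent, so the extra move may change which continuations are legal downstream. In short, your proposal is an honest and sensible research outline that correctly identifies why the monovariant techniques of the paper cannot settle the question, but it proves neither assertion; every genuinely hard step is deferred, which is exactly why the statement remains a conjecture in the paper.
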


This conjecture is supported by simulation data presented in Figure~\ref{fig:1_random_distribution} and Figure~\ref{fig:2_random_outcomes}, which suggest that the observed move counts exhibit the heavy-tailed, right-skewed characteristics consistent with a log-normal distribution. Furthermore, the symmetry in the outcome frequencies aligns with the theoretical expectation of equal win probabilities for both players under a uniformly random move selection process.

%%%%%%%%%%%%%%%%%%%%%%%%%%%%%%%%%%%%%%%%%%%%%%%%%%%%%%%%%%%%%
%%%%%%%%%%%%%%%%%%%%%%%%%%%%%%%%%%%%%%%%%%%%%%%%%%%%%%%%%%%%%

\section{Game Winning Strategy}

Through exhaustive simulations, we investigated the existence of deterministic winning strategies for both players in the Two-Player Ordered Zeckendorf Game. Our computational experiments reveal that Player 1 possesses a winning strategy for all initial values up to \( n = 17 \). At \( n = 18 \), we observe the first instance where Player 2 can force a win under optimal play, indicating a potential transition point in the game’s strategic landscape. Interestingly, in all simulated games for \( n \leq 25 \), Player 2 is only able to force a win once, suggesting that Player 1 generally has a significant advantage, especially for smaller values of \( n \).

Due to the combinatorial explosion in the number of game states and legal move sequences, a full resolution of the winner for \( n > 25 \) was computationally infeasible. The structure of the game tree grows rapidly with \( n \), and optimal strategies require deep lookahead and pruning heuristics that go beyond naive enumeration.

The existence of a general winning strategy for either player remains an open problem. Furthermore, due to the game's nontrivial dynamics and the subtle impact of move orderings on future legal moves, it is possible that the game exhibits regions of alternating player advantage or chaotic behavior in terms of outcome predictability.

Future work could include developing a minimax-based heuristic search with alpha-beta pruning to more efficiently explore deeper game trees and extract structural insights into what governs winning positions. Additionally, identifying game invariants or monotonic features that correlate with winning states could pave the way toward a formal characterization of optimal strategies.

%%%%%%%%%%%%%%%%%%%%%%%%%%%%%%%%%%%%%%%%%%%%%%%%%%%%%%%%%%%%%
%%%%%%%%%%%%%%%%%%%%%%%%%%%%%%%%%%%%%%%%%%%%%%%%%%%%%%%%%%%%%

\section{Conclusion}

We have introduced and analyzed a novel ordered variant of the Zeckendorf game. We established key theoretical results, including termination guarantees, asymptotic bounds on the maximum number of moves, and lower bounds on the minimal number of required moves. Our study combines combinatorial structure with game dynamics, revealing rich mathematical behavior in even small initial configurations.

Despite our progress, many open questions remain.

\begin{itemize}
    \item Optimal Strategy Characterization: Can we rigorously prove the existence and form of an optimal winning strategy for Player 1 or Player 2 across all \( n \)? Is the observed dominance of Player 1 for small \( n \) a universal phenomenon, or do other exceptions arise at larger scales?

    \item Maximal Game Length Proof: While we conjecture a quadratic asymptotic growth and provide a priority-based strategy achieving near-maximal length, a formal proof of optimality remains open. Can analytic or combinatorial tools be developed to close this gap?

    \item Generalizations to Other Recurrences: How does the introduction of order and switching moves affect games based on generalized Fibonacci sequences, such as \( k \)-bonacci sequences or non-constant recurrences? Do analogous termination and length results hold?

    \item Probabilistic Analysis of Random Play: Our empirical evidence suggests a log-normal distribution of move counts under uniform random play. Can this distribution be derived rigorously? What are the deeper probabilistic and ergodic properties of the random dynamics?

    \item Computational Complexity and Algorithmic Approaches: Given the rapid growth of the game tree, can efficient algorithms or heuristics be developed to determine winning positions for larger \( n \)? Are there polynomial-time methods to approximate optimal strategies or game lengths?

    \item Structural Invariants and Monovariants: Are there additional invariants or monotonic quantities beyond those currently identified that govern the evolution of game states? How might these guide strategic reasoning or simplify analysis?

    \item Multiplayer or Alliance Variants: Extending to more than two players or cooperative alliances—how do order and adjacency constraints reshape the strategic landscape? Can these variants be fully characterized or related to known combinatorial game theory frameworks?

\end{itemize}

These questions point toward deeper connections with additive number theory, algorithmic game theory, and dynamical systems. We believe this variant offers a promising platform for further exploration of combinatorial games and integer representations.

All simulations and visualizations used in our analysis were implemented in Python and C++, and the full codebase is publicly available at \url{https://github.com/vraneskoy/zeckendorf_ordered_game}.

%%%%%%%%%%%%%%%%%%%%%%%%%%%%%%%%%%%%%%%%%%%%%%%%%%%%%%%%
%%%%%%%%%%%%%%%%%%%%%%%%%%%%%%%%%%%%%%%%%%%%%%%%%%%%%%%

\section{Appendix}
The following two figures provide computational evidence in support of Conjecture \ref{conj:log-gaussian}.
\begin{figure}[H]
    \centering
    \includegraphics[width=0.9\linewidth]{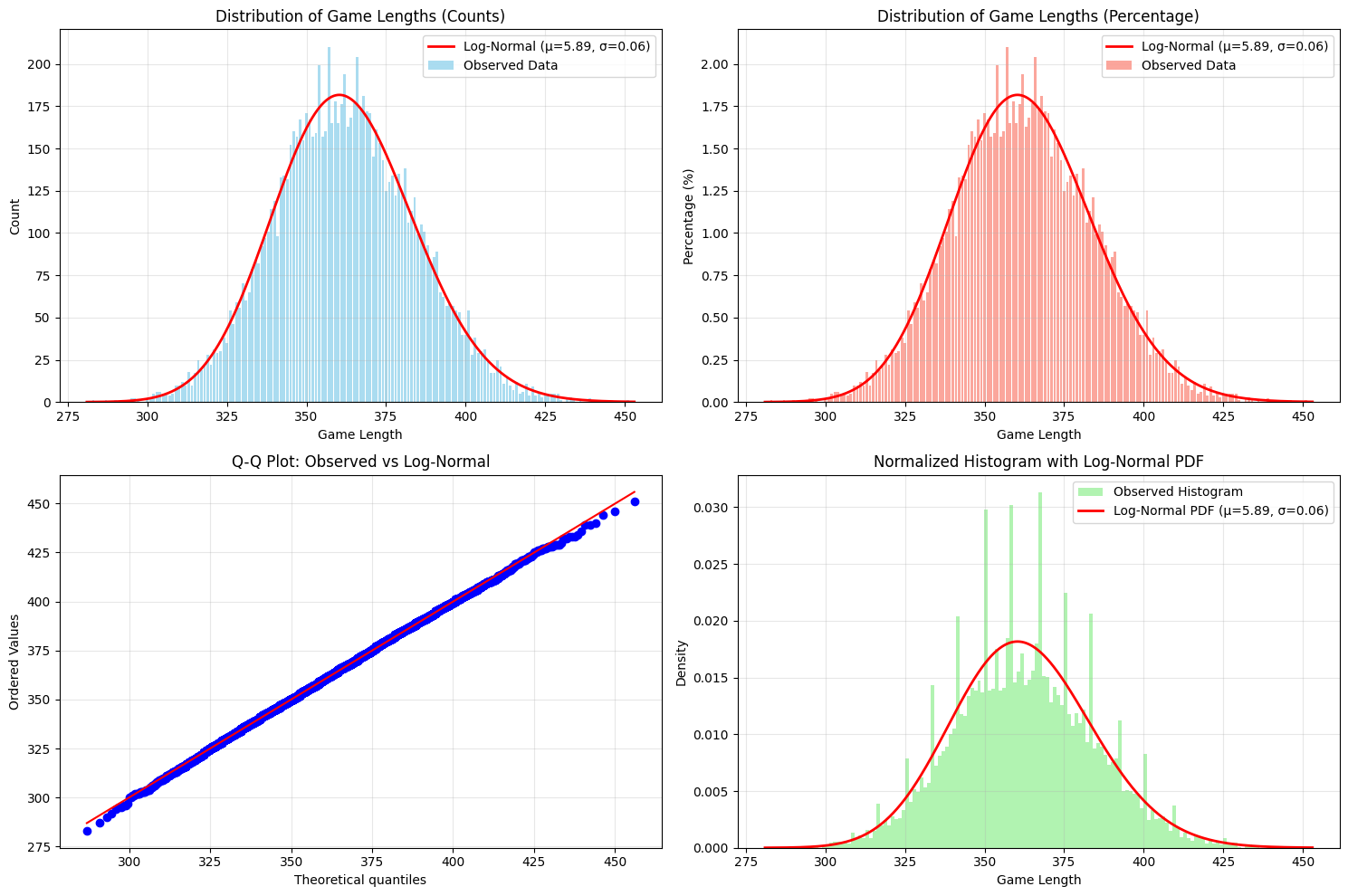}
    \caption{Frequency graphs of the number of moves in 10,000 simulations of the Zeckendorf Game with random moves when $n = 150$ with
the best fit log Gaussian over the data points.}
    \label{fig:1_random_distribution}
\end{figure}

\begin{figure}[H]
    \centering
    \includegraphics[width=0.35\linewidth]{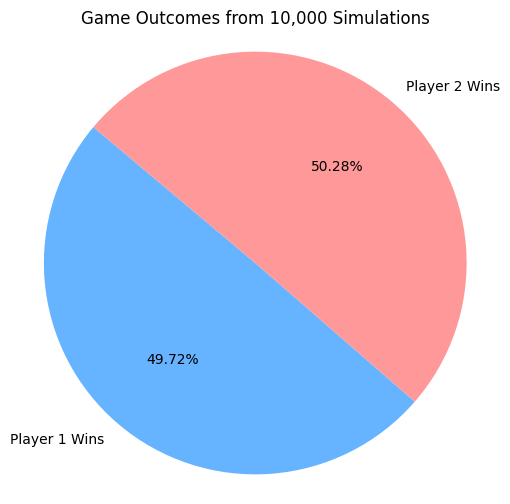}
    \caption{The distribution of outcomes for 10,000 random games for $n = 150$.}
    \label{fig:2_random_outcomes}
\end{figure}

Corollary \ref{cor:bound-termination} gives us an error term in our upper bound on maximal game length of
$n^2/2 - M(n) = \Theta(n \log_\phi(n))$. The following figure provides an estimation for the constant before the $n \log_\phi(n)$ term.

\begin{figure}[H]
    \centering
    \includegraphics[width=0.9\linewidth]{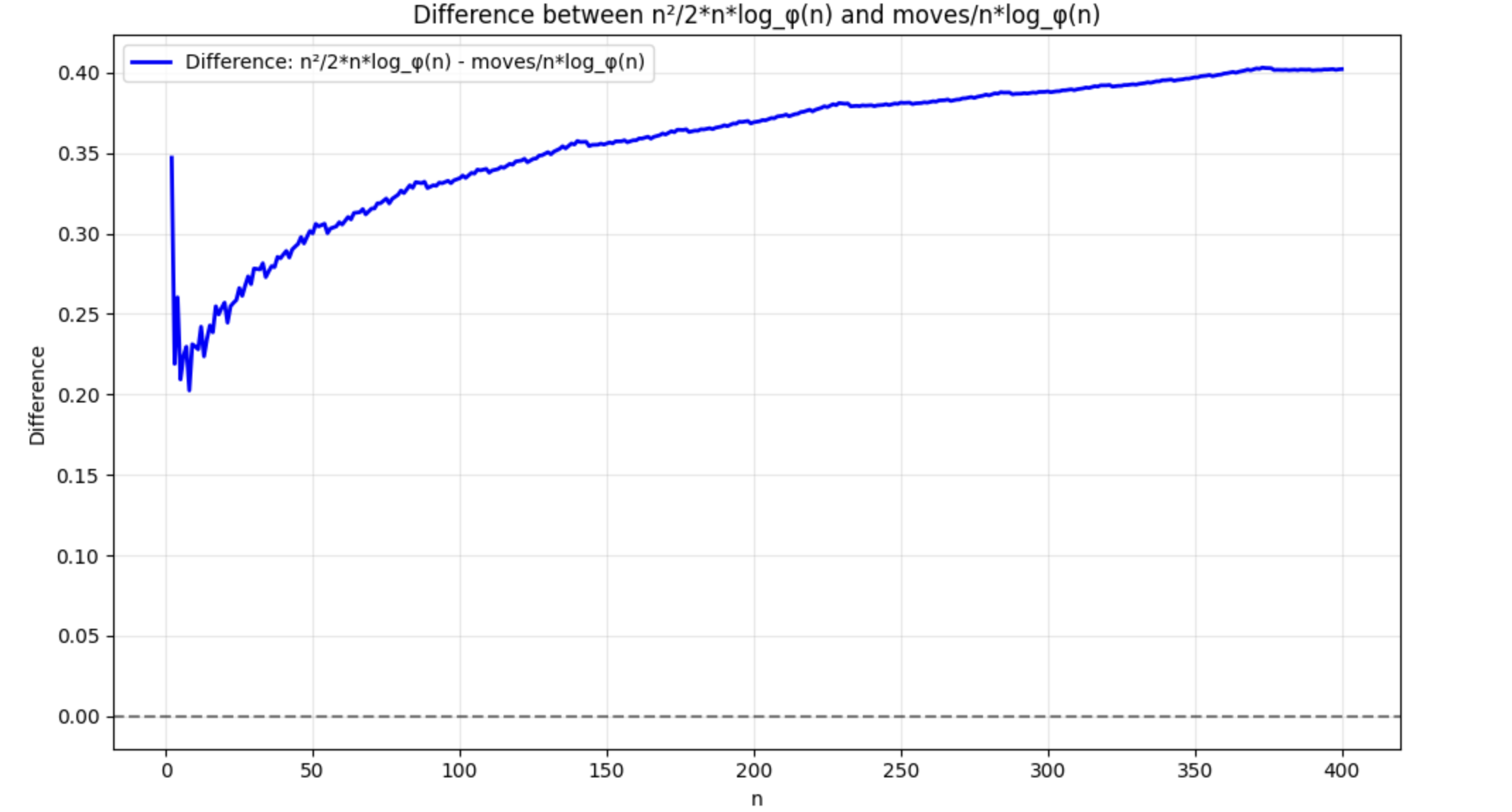}
    \caption{As \( n \to \infty \), the ratio $\dfrac{\frac{n^2}{2}-M(n)}{n\log_\phi(n)}$} appears to converge to a number bigger than 0.4 and much lower than 1.
    \label{fig:error_term_estimation}
\end{figure}

\bibliographystyle{plain}

\bibliography{mybib}

@article{Zeckendorf1972,
  title={Representations des nombres naturels par une somme de nombres de fibonacci on de nombres de lucas},
  author={Zeckendorf, {\'E}douard},
  journal={Bulletin de La Society Royale des Sciences de Liege},
  pages={179--182},
  year={1972}
}

@inproceedings{ZeckGameOriginal,
  title={The {Z}eckendorf {G}ame},
  author={Baird-Smith, Paul and Epstein, Alyssa and Flint, Kristen and Miller, Steven J},
  booktitle={Combinatorial and Additive Number Theory, New York Number Theory Seminar},
  pages={25--38},
  year={2018},
  organization={Springer}
}

@article{li2020deterministic,
  title={Deterministic Zeckendorf Games},
  author={Li, Ruoci and Li, Xiaonan and Miller, Steven J and Mizgerd, Clayton and Sun, Chenyang and Xia, Dong and Zhou, Zhyi},
  journal={The Fibonacci Quarterly},
  volume={58},
  number={5},
  pages={152--160},
  year={2020},
  publisher={Taylor \& Francis}
}

@article{Beckwith2013,
  title={The average gap distribution for generalized {Z}eckendorf decompositions},
  author={Beckwith, Olivia and Bower, Amanda and Gaudet, Louis and Insoft, Rachel and Li, Shiyu and Miller, Steven J and Tosteson, Philip},
  journal={The Fibonacci Quarterly},
  volume={51},
  number={1},
  pages={13--27},
  year={2013},
  publisher={Taylor \& Francis}
}

@article{Kopp2011,
  title={On the number of summands in {Z}eckendorf decompositions},
  author={Kolo{\u{g}}lu, Murat and Kopp, Gene S and Miller, Steven J and Wang, Yinghui},
  journal={The Fibonacci Quarterly},
  volume={49},
  number={2},
  pages={116--130},
  year={2011},
  publisher={Taylor \& Francis}
}

@article{Gueganic2019,
  title={Limiting Distributions in {G}eneralized {Z}eckendorf {D}ecompositions},
  author={Gueganic, Alexandre and Carty, Granger and Kim, Yujin H and Miller, Steven J and Shubina, Alina and Sweitzer, Shannon and Winsor, Eric and Yang, Jianing},
  journal={The Fibonacci Quarterly},
  volume={57},
  number={2},
  pages={109--125},
  year={2019},
  publisher={Taylor \& Francis}
}

@article{BairdSmith2019,
  title={The generalized {Z}eckendorf game},
  author={Baird-Smith, Paul and Epstein, Alyssa and Flint, Kristen and Miller, Steven J},
  journal={The Fibonacci Quarterly},
  volume={57},
  number={5},
  pages={1--14},
  year={2019},
  publisher={Taylor \& Francis}
}

@article{Boldyriew2020,
  title={Extending {Z}eckendorf’s theorem to a non-constant recurrence and the {Z}eckendorf {G}ame on this non-constant recurrence relation},
  author={Bo{\l}dyriew, El{\.z}bieta and Cusenza, Anna and Dai, Linglong and Ding, Pei and Dunkelberg, Aidan and Haviland, John and Huffman, Kate and Ke, Dianhui and Kleber, Daniel and Kuretski, Jason and others},
  journal={The Fibonacci Quarterly},
  volume={58},
  number={5},
  pages={55--76},
  year={2020},
  publisher={Taylor \& Francis}
}

@article{Cusenza2021,
  title={Winning Strategy for Multiplayer and Multialliance {Z}eckendorf Games},
  author={Cusenza, Anna and Dunkelberg, Aidan and Huffman, Kate and Ke, Dianhui and Kleber, Daniel and Miller, Steven J and Mizgerd, Clayton and Tiwari, Vashisth and Ye, Jingkai and Zheng, Xiaoyan},
  journal={The Fibonacci Quarterly},
  volume={59},
  number={4},
  pages={308--318},
  year={2021},
  publisher={Taylor \& Francis}
}

\ \\

\end{document}